\documentclass[11pt,a4paper,reqno]{amsart}
\usepackage{amsmath}
\usepackage{mathrsfs}
\usepackage{amsfonts}
\usepackage{amssymb}
\usepackage{graphicx}
\usepackage{amsthm}
\usepackage{tikz}
\usepackage{pgfplots}
\usepackage[latin2]{inputenc}
\usepackage{t1enc}
\usepackage{multicol, verbatim}
\oddsidemargin = 0.2in  \evensidemargin = 0.2in \textwidth =6.1in
\textheight =8.9in

\newcommand\C{\mathbb{C}}

\newcommand\R{\mathbb{R}}

\newcommand\D{\mathscr{D}}
\newcommand\ST{\mathscr{S}}

\numberwithin{equation}{section}

\usepackage{hyperref}
\usepackage{xcolor}
\hypersetup{
    colorlinks,
    linkcolor={red!50!black},
    citecolor={blue!50!black},
    urlcolor={blue!80!black}
}
\definecolor{ao(english)}{rgb}{0.0, 0.5, 0.0}

\theoremstyle{plain}
\theoremstyle{remark}
\newtheorem{remark}{Remark}[section]
\theoremstyle{lemma}
\newtheorem{lemma}{Lemma}[section]
\newtheorem{theorem}{Theorem}[section]
\theoremstyle{corollary}
\newtheorem{corollary}[theorem]{Corollary}

\usepackage{mathtools}

\DeclarePairedDelimiter\floor{\lfloor}{\rfloor}

\title{Limit Shapes for Unimodal Sequences}
\date{}
\author{Walter Bridges}

\address{Louisiana State University \\ Department of Mathematics }

\email{wbridg6@lsu.edu}

 \subjclass[2020]{05A17 11P82}

 \keywords{unimodal sequence, limit shape, partition, overpartition}

\begin{document}

\begin{abstract}
We prove asymptotic 0-1 Laws satisfied by diagrams of unimodal sequences of positive integers.  These diagrams consist of columns of squares in the plane, and the upper boundary is called the shape.  For various types, we show that, as the number of squares tends to infinity, $100\%$ of shapes are near a certain curve---that is, there is a single {\it limit shape}.  Similar phenomena have been well-studied for integer partitions, so the present work is a natural extension.  One notable corollary is a transferred limit shape for overpartitions.
\end{abstract}

\maketitle

\section{Introduction}
\subsection{Statement of Main Results}
A {\it unimodal sequence} $\lambda=\{\lambda_j\}_{j=1}^s$ of {\it size} $n$ is a sequence of positive integers that sum to $n$ and that increase weakly and then decrease weakly:
\begin{equation}\label{unimodaldef}
\lambda: \qquad 0<\lambda_1 \leq \dots \leq \lambda_{k-1} \leq \lambda_k \geq \lambda_{k+1} \geq \dots \geq \lambda_s > 0 \qquad \text{and} \qquad \sum_{j=1}^s \lambda_j=n.
\end{equation}
For example, the unimodal sequences of size $4$ are $(1,1,1,1)$, $(1,1,2)$, $(1,2,1)$, $(2,1,1)$, $(2,2)$, $(1,3)$, $(3,1)$, $(4).$  The first systematic study of unimodal sequences and their asymptotic behavior is usually credited to Wright, who called them ``stacks'' in a series of papers \cite{W1}-\cite{W3}.  

We refer to the $\lambda_j$ as the {\it parts} of a sequence.  Its {\it peaks} are $\lambda_k$ and any other parts equal to $\lambda_k$.  The {\it diagram} of $\lambda$ is the set of adjacent columns of unit squares in the plane, where the $j$-th column has $\lambda_j$ squares.  To fix a centering of a diagram, we will always choose to place the left-most peak vertex on the $y$-axis (although our results hold regardless of which peak vertex we fix as the center).  In this paper, we study the {\it shape}, $\varphi(\lambda)$, which is the top border of the diagram of $\lambda$.

\begin{figure}
\begin{tikzpicture}[scale=.7, line width=1pt]
  \draw (0,0) grid (7,1);
  \draw (1,2) grid (6,1);
  \draw (2,3) grid (5,1);
   \draw (2,4) grid (5,3);
    \draw (3,5) grid (5,3);
    \draw[->] (3,0) -- (3,7);
    \draw[->] (0,0) -- (8, 0);
    \draw[<-] (-2,0) -- (0, 0);
    \draw[color=red, line width= 2pt] (0,0) -- (0,1) -- (1,1) -- (1,2) -- (2,2) -- (2,4)-- (3,4) -- (3,5)-- (5,5) -- (5,2) -- (6,2) -- (6,1) -- (7,1) -- (7,0);
    \draw  (1,4) -- (1,4) node[anchor=east] {\Large {\color{red} $\widetilde{\varphi}(\lambda)$}};
    \draw (0,0) -- (0,0) node[anchor=north] {$\frac{-3}{\sqrt{20}}$};
   \draw (3 ,5) -- (3 ,5) node[anchor=east] {$\frac{5}{\sqrt{20}}$};
   \draw (7 ,0) -- (7 ,0) node[anchor=north] {$\frac{4}{\sqrt{20}}$};
 \end{tikzpicture}
\caption{Diagram and renormalized shape for $\lambda=(1,2,4,5,5,2,1)$ of size $20$.}
\label{varphilambda}
\end{figure}
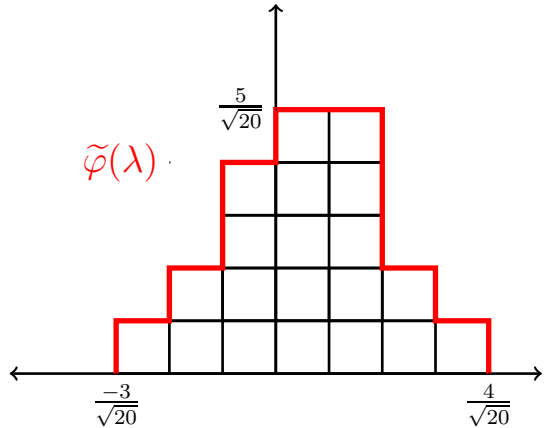

To compare diagrams of large size to a fixed curve, it is convenient to rescale them to have area 1, so let us define the {\it renormalized shape} $\widetilde{\varphi}(\lambda)$ to be the shape obtained from $\varphi(\lambda)$ by rescaling both the $x$- and $y$-axes by $\frac{1}{\sqrt{n}}$ when the size of $\lambda$ is $n$ (see Figure \ref{varphilambda}).

Roughly, the question we want to answer is the following: What are the typical shapes of diagrams of size $n$, as $n \to \infty$?  Here, ``typical'' will mean ``under the uniform probability measure on diagrams of size $n$''.  With these notions of ``typical shape'', it will turn out that, for the types of unimodal sequences we consider, there is a single {\it limit shape}.

This type of striking 0-1 law has been well-studied for integer partitions.  In \cite{F}, Fristedt introduced probabilistic machinery to make a deep study of the limiting behavior of partitions.  This machinery was subsequently used by Vershik in \cite{V} to state many types of limit shapes.  Proofs of limit shapes for unrestricted and distinct parts partitions, by means of a stronger large deviation principle, were finally proved by Dembo, Vershik and Zeitouni in \cite{DVZ}.  For more information on the history of limit shapes for partitions, we direct the reader to Sections 1 and 12 of \cite{DP}.  

In particular, a limit shape for unrestricted partitions of size $n$ under the uniform probability measure is
\begin{equation}\label{Vershikscurve}
y=f_p(x)=-\frac{\sqrt{6}}{\pi}\log\left(1-e^{-\frac{\pi}{\sqrt{6}}x}\right).
\end{equation}
Note that this can be symmetrized as $e^{\frac{\pi}{\sqrt{6}}x}+e^{\frac{\pi}{\sqrt{6}}y}=1$, which respects the involution on partitions given by conjugation.  (See \cite{A98} \S 1.3.)  An ``elementary'' proof (one that does not require measure theory) that \eqref{Vershikscurve} is the limit shape for partitions was given by Petrov in \cite{P}, and we will take such an approach here.  A different type of problem was recently solved by DeSalvo and Pak, who found conditions under which partition bijections allow for the transfer of limit shapes \cite{DP}.  We will see one result of this type in the present paper.

We now state our main results.  Following the notation of Bringmann-Mahlburg in \cite{BM}, let $\ST(n)$ denote the set of (unrestricted) unimodal sequences of size $n$, and denote its cardinality by $s(n)$.  Let $\D(n)$ denote the set of {\it strongly unimodal sequences} of size $n$, and denote its cardinality by $d(n)$; these have the added requirement that all of the inequalities in \eqref{unimodaldef} are strict.  Finally, let $\D_m(n)$ denote the set of {\it semi-strict unimodal sequences} of size $n$, and denote its cardinality by $dm(n)$; here, we require that there be a single peak and that the inequalities to the left of it in \eqref{unimodaldef} are strict.

For a function $f(x)$, let $N_{\epsilon}(f)$ denote the set of points in the plane whose horizontal distance from $y=f(x)$ is at most $\epsilon$, together with $\epsilon$ neighborhoods of the $x$- and $y$-axes.  (The latter components of $N_{\epsilon}$ are necessary to account for vertical and horizontal asymptotes of functions.)

\begin{theorem}[Strongly Unimodal Sequences]\label{T:D}
Let $\epsilon > 0$ be arbitrary and let
$$
f_d(x):= \begin{cases} - \frac{\sqrt{6}}{\pi} \log \left(e^{-\frac{\pi}{\sqrt{6}}x}-1 \right) & \text{if $x \in \left[-\frac{\sqrt{6}}{\pi}\log(2),0 \right)$,} \\ - \frac{\sqrt{6}}{\pi} \log \left(e^{\frac{\pi}{\sqrt{6}}x}-1 \right) & \text{if $x \in \left(0,\frac{\sqrt{6}}{\pi}\log(2) \right]$.} \end{cases}
$$
Then
\begin{equation}\label{shapeasym} \lim_{n \to \infty} \frac{1}{d(n)} \cdot \# \left\{ \lambda \in \D(n): \widetilde{\varphi}(\lambda) \subset N_{\epsilon}(f_d) \right\} = 1. \end{equation}
\end{theorem}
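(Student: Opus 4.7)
The plan is to adapt Petrov's elementary probabilistic framework \cite{P} to strongly unimodal sequences. Structurally, each $\lambda\in\D$ with peak $M$ is parameterized uniquely by a triple $(M,\tilde L,\tilde R)$, where $\tilde L,\tilde R\subset\{1,\dots,M-1\}$ record the non-peak parts on the left and right halves of the diagram. This gives
$$\sum_{n\ge 1}d(n)q^n \;=\; \sum_{M\ge 1} q^M\prod_{j=1}^{M-1}(1+q^j)^2.$$
Introduce the grand-canonical measure $P_q(\lambda)\propto q^{|\lambda|}$ on all of $\D$. Under $P_q$, conditional on $M=m$, the indicators $X_j^L:=\mathbf{1}[j\in\tilde L]$ and $X_j^R:=\mathbf{1}[j\in\tilde R]$ (for $1\le j<m$) are mutually independent Bernoulli with $P_q(X_j^\bullet=1)=q^j/(1+q^j)$, and conditioning further on $|\lambda|=n$ returns the uniform measure on $\D(n)$. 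Setting $\beta:=-\log q=\pi/\sqrt{6n}$ (so $c:=\pi/\sqrt 6$), a standard saddle-point computation gives $E_q[|\lambda|]=n(1+o(1))$, while a similar analysis of the marginal $P_q(M=m)\propto q^m\prod_{j<m}(1+q^j)^2$ concentrates the peak at $M\sim \sqrt n\log n/(2c)$, so $M/\sqrt n\to\infty$.

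The main analytic step is the row-width calculation. With the left peak vertex fixed on the $y$-axis, the width of the diagram at height $h\ge 1$ is
$$W(h) \;=\; W_L(h)+W_R(h)-\mathbf{1}[h\le M], \qquad W_L(h):=\mathbf{1}[h\le M]+\sum_{j\ge h}X_j^L,$$
and analogously for $W_R(h)$. For $h=\tilde y\sqrt n$ with $\tilde y>0$, an Euler--Maclaurin estimate gives
$$E_q[W_L(h)] \;=\; \tfrac{\sqrt n}{c}\log\!\bigl(1+e^{-c\tilde y}\bigr)+O(1).$$
A direct check shows $f_d^{-1}(\tilde y)=c^{-1}\log(1+e^{-c\tilde y})$ on $(0,\infty)$, so $E_q[W_L(h)]/\sqrt n\to f_d^{-1}(\tilde y)$. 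Since $W_L(h)-\mathbf{1}[h\le M]$ is a sum of independent Bernoullis, Chernoff's inequality yields
$$P_q\bigl(|W_L(h)-E_q[W_L(h)]|>\epsilon\sqrt n\bigr) \;\le\; e^{-c(\epsilon)\sqrt n},$$
with the analogous bound for $W_R$.

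Uniform convergence of the shape then follows by a standard grid-and-interpolate argument: apply the row estimate on a finite grid $\tilde y_\ell=\ell\delta$, $1\le\ell\le\lceil 2/(c\delta)\rceil$, take a union bound (with $O(1/\delta)$ events and exponentially small individual failure probability), and interpolate via the monotonicity of $W_L,W_R$ in $h$. This places the main body of $\widetilde\varphi(\lambda)$ inside $N_{2\epsilon}(f_d)$. The single tall column of height $M$ at $\tilde x=0$ sits inside the $\epsilon$-neighborhood of the $y$-axis (which is part of $N_\epsilon$), and the two feet near $\tilde x=\pm c^{-1}\log 2$ lie in the $\epsilon$-neighborhood of the $x$-axis once we separately control the base width $W(1)$.

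Finally, the high-probability statement under $P_q$ is transferred to the uniform measure on $\D(n)$ via
$$P_{\mathrm{unif}}(A) \;=\; \frac{P_q(A\cap\{|\lambda|=n\})}{P_q(|\lambda|=n)}.$$
One needs the local limit input $P_q(|\lambda|=n)\asymp n^{-3/4}$, which follows from a Hardy--Ramanujan-type saddle-point analysis of the generating function displayed above. Because the failure probability from the previous step is $e^{-\Omega(\sqrt n)}$, this transfer is essentially lossless and yields \eqref{shapeasym}. The main obstacle I foresee is uniform control of the shape near $\tilde y=0$, where $W(h)$ is comparable to $\sqrt n$ and direct Chernoff bounds deteriorate; handling this region carefully---either by a separate truncation at height $O(1)$, or by sharper estimates on $\sum_{j\le h}q^j/(1+q^j)$ coupled with an upper bound on the total number of columns---will be the delicate part of the argument.
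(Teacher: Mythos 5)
Your proposal is correct in outline, but it takes a genuinely different route from the paper. You work in the grand-canonical (Fristedt--Vershik/Petrov) ensemble $P_q$, center the diagram at the peak, control row widths $W_L(h),W_R(h)$ at a grid of heights by Chernoff bounds, and transfer to the uniform measure on $\D(n)$ via a lower bound on $P_q(|\lambda|=n)$. The paper instead never introduces a probability measure on all of $\D$: it bounds, by direct coefficient extraction from the two-variable generating function $\sum_{m\ge b}q^{m+1}\prod_{j\le m}(1+q^j)^2\prod_{j\le b}\frac{1+zq^j}{1+q^j}$ evaluated at the saddle $q=e^{-c/\sqrt n}$, the proportion of $\lambda\in\D(n)$ having a \emph{left vertex} $(a,b)$ off the curve, with the left-most (resp.\ right-most) vertex pinned to the $y$-axis; it then glues the two half-shapes using only two soft facts: the peak of a strongly unimodal sequence is unique (so the peak vertices lie within $1/\sqrt n$ of the $y$-axis once the left peak vertex is centered there) and the renormalized area is $1$. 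In particular the paper's proof of this theorem needs no information about the size of the peak; the peak analysis ($\omega(\sqrt n)$ on average) is reserved for Theorems \ref{T:S} and \ref{T:DM}. Your route buys a cleaner probabilistic picture and two-sided width control directly in the peak-centered coordinates, at the cost of two extra inputs the paper avoids here: a quantitative lower-tail bound for the peak and a transfer estimate. Both are available: $P_q(M\le K\sqrt n)\le e^{-c_K\sqrt n}$ follows by comparing $\sum_{m\le K\sqrt n}q^m\prod_{j<m}(1+q^j)^2$ with $\D(q)\sim\frac14e^{c\sqrt n}$, and for the transfer you do not need the local limit $P_q(|\lambda|=n)\asymp n^{-3/4}$; the weaker bound $P_q(|\lambda|=n)\ge e^{-o(\sqrt n)}$, which is exactly the paper's Lemma \ref{L:D} ($\log d(n)\sim\log(q^{-n}\D(q))$), already beats your $e^{-\Omega(\sqrt n)}$ failure probabilities.

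Two points need tightening. First, the indicators $X_j^L,X_j^R$ are independent Bernoulli only \emph{conditionally} on $M=m$ (unconditionally they are coupled through $M$, since $X_j^\bullet=1$ forces $M>j$), so the Chernoff step must be run conditionally on $\{M=m\}$ for $m\ge K(\epsilon)\sqrt n$ and combined with the lower-tail bound on $M$; this also fixes the truncation error $\sum_{j\ge K\sqrt n}q^j/(1+q^j)\approx\frac{\sqrt n}{c}e^{-cK}$ in your Euler--Maclaurin estimate, which is why $K$ must depend on $\epsilon$. Second, the difficulty you flag near $\tilde y=0$ is largely illusory given the paper's definition of $N_\epsilon$: every point of $\widetilde\varphi(\lambda)$ of renormalized height at most $\epsilon$ lies in the $x$-axis strip automatically, heights in $[\epsilon,A]$ are handled by your grid, and heights above $A$ (where $f_d^{-1}(\tilde y)<\epsilon/2$) follow from monotonicity of the width in $h$ together with the estimate at height $A$, which places those boundary points in the $y$-axis strip; no separate control of the base width or of the number of columns is required for the stated one-sided containment.
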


Note that $$
\int_{\R} f_{d}(x)dx= 2\int_0^{\frac{\sqrt{6}}{2} \log 2} -\frac{\sqrt{6}}{\pi} \log\left(e^{\frac{\pi}{\sqrt{6}}x}-1\right)dx= \frac{12}{\pi^2}\int_1^2 \frac{\log(t-1)}{t}dt
$$
$$
=\frac{12}{\pi^2}\left(-\text{Li}_2(1-t)-\log(t-1)\log t\right) \Big\rvert_{t=1}^2= -\frac{12}{\pi^2}\text{Li}_2(-1)=1,
$$
where, as usual, the dilogarithm is defined for $z \in \C \setminus \R_{>1}$ by  $\frac{d}{dz} \text{Li}_2(z)= -\frac{1}{z} \log(1-z)$ and $\text{Li}_2(0)=0$.  Here, $\log(z)$ is the principal branch of the complex logarithm.  For $|z|\leq 1$, one also has the series representation, $\text{Li}_2(z)=\sum_{n \geq 1} \frac{z^n}{n^2}$.  Integrals of the other functions below are similarly evaluated in terms of $\text{Li}_2(z)$.

\begin{theorem}[Unrestricted Unimodal Sequences]\label{T:S}  Let $\epsilon > 0$ be arbitrary and let
$$
f_s(x):= \begin{cases} -\frac{\sqrt{3}}{\pi} \log \left(1-e^{\frac{\pi}{\sqrt{3}}x} \right) & \text{if $x < 0$,} \\  -\frac{\sqrt{3}}{\pi} \log \left(1-e^{-\frac{\pi}{\sqrt{3}}x} \right) & \text{if $x > 0$.} \end{cases}
$$
Then
$$
\lim_{n \to \infty} \frac{1}{s(n)} \cdot \# \left\{ \lambda \in \ST(n) :\widetilde{\varphi}(\lambda) \subset N_{\epsilon}(f_s) \right\}=1.
$$
\end{theorem}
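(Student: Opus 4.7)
The strategy is to adapt Petrov's elementary probabilistic method from \cite{P} to the two-sided structure of unimodal sequences. Every $\lambda\in\ST(n)$ admits a canonical decomposition $\lambda=(\mu,\,m,\,r,\,\nu)$, where $m:=\max_j\lambda_j$, $r\geq 1$ is the length of the peak plateau, and $\mu$ (resp.\ $\nu$) is the partition of parts strictly less than $m$ lying to the left (resp.\ right) of the peak plateau. This yields
\begin{equation*}
S(q)\;=\;\sum_{n\geq 0}s(n)\,q^n\;=\;\sum_{m\geq 1}\frac{q^m}{(1-q^m)\,(q;q)_{m-1}^{\,2}}.
\end{equation*}
I would introduce the Fristedt-type measure $P_q(\lambda):=q^{|\lambda|}/S(q)$ on all unimodal sequences. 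Under $P_q$, conditionally on $m$, the plateau length $r$ is geometric on $\{1,2,\dots\}$ with parameter $1-q^m$, and $\mu$ and $\nu$ are independent Fristedt-distributed partitions with parts bounded by $m-1$; in particular the multiplicity of each value $\ell<m$ on either side is an independent geometric random variable. Conditioning $P_q$ on the slice $\{|\lambda|=n\}$ recovers the uniform probability measure on $\ST(n)$.

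Set $q_n:=e^{-\pi/\sqrt{3n}}$; this is the unique parameter value for which $E_{q_n}[|\lambda|]=n(1+o(1))$, as forced by the Hardy--Ramanujan/Auluck asymptotic $s(n)\sim c\,n^{-5/4}e^{2\pi\sqrt{n/3}}$. With this choice, the expected multiplicity of the value $\ell$ on either side of the peak is $q_n^\ell/(1-q_n^\ell)$; after rescaling $\ell=y\sqrt n$, this converges to the Bose factor $(e^{\pi y/\sqrt 3}-1)^{-1}$, whose antiderivative $-\tfrac{\sqrt 3}{\pi}\log(1-e^{-\pi y/\sqrt 3})$ is precisely $f_s(y)$ for $y>0$. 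The heart of the proof is the concentration estimate
\begin{equation*}
P_{q_n}\bigl(\widetilde{\varphi}(\lambda)\not\subset N_\epsilon(f_s)\bigr)\;=\;o\!\left(P_{q_n}(|\lambda|=n)\right).
\end{equation*}
To prove this I would discretize $\R$ into finitely many test heights (mesh $\Theta(\sqrt n/\log n)$), express the rescaled shape at each height as a sum of Fristedt multiplicities---one summand per side---and apply Chebyshev's inequality, using that each such sum is a weighted sum of independent geometrics with explicit first and second moments. A local-limit/saddle-point analysis of $[q^n]S(q)$, analogous to the one for partitions, pins down $P_{q_n}(|\lambda|=n)$ to polynomial order; a union bound over test heights then transfers the concentration under $P_{q_n}$ to concentration under the conditional (uniform) measure, yielding the theorem.

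The main obstacle I anticipate lies in the coupling between the two halves through the shared maximum $m$: because $\mu$ and $\nu$ are constrained to have parts $\leq m-1$ for a common random $m$, neither half is truly an unrestricted Fristedt partition. One must verify that $m$ itself concentrates near its heuristic scale $\sqrt n\log n$ with probability $1-o(P_{q_n}(|\lambda|=n))$, so that on this good event each half can be compared to an unrestricted Fristedt partition on the $y$-range relevant to $f_s$. A secondary technical point is to make the saddle-point estimate for $P_{q_n}(|\lambda|=n)$ sufficiently quantitative that the $o(\cdot)$ bound on the bad event survives deconditioning; this may require refining the standard partition circle-method bounds to accommodate the extra outer sum over $m$.
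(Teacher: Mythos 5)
Your setup (the decomposition $\lambda=(\mu,m,r,\nu)$, the measure $P_q(\lambda)=q^{|\lambda|}/S(q)$, the choice $q_n=e^{-\pi/\sqrt{3n}}$, and deconditioning on $\{|\lambda|=n\}$) is sound and is a legitimately more probabilistic packaging of what the paper does with generating-function coefficients. But there is a genuine quantitative gap at the heart of your concentration step: Chebyshev's inequality is too weak to deliver $P_{q_n}\bigl(\widetilde{\varphi}(\lambda)\not\subset N_\epsilon(f_s)\bigr)=o\bigl(P_{q_n}(|\lambda|=n)\bigr)$. At a fixed test height $y\sqrt{n}$ with $y\geq\epsilon$, the relevant count is a sum of independent geometrics with mean $\asymp\sqrt{n}$ and variance $\asymp\sqrt{n}$, so a deviation of size $\epsilon\sqrt{n}$ gets only a Chebyshev bound of order $\epsilon^{-2}n^{-1/2}$. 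On the other hand, the saddle-point/local-limit computation you invoke gives $P_{q_n}(|\lambda|=n)\asymp n^{-3/4}$ (using $S(e^{-t})\sim\frac{t}{4\pi}e^{\pi^2/(3t)}$ and $s(n)\sim c\,n^{-5/4}e^{2\pi\sqrt{n/3}}$), so $n^{-1/2}$ per height --- even before the union bound --- does not beat the cost of deconditioning. You need exponential tail bounds: since the multiplicities are independent geometrics, an exponential-tilting (Chernoff) argument gives bounds of the shape $e^{-C\epsilon\sqrt{n}}$, and this is exactly what the paper's $U(\tau)$ computation in Section \ref{S:S} is --- a Chernoff bound carried out inside the generating function, with the uniformity in $y$ checked by integral comparison. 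With Chernoff in place of Chebyshev your scheme goes through; with Chebyshev it fails.

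The second point you flag --- the coupling of the two halves through the maximum $m$ --- is indeed the other essential ingredient, and in the paper it is not a side remark but a separate lemma (Lemma \ref{L:Speaks}), proved combinatorially by injecting sequences with peak $\leq t\sqrt{n}$ into pairs of partitions with bounded largest part and using the Szekeres--Canfield--Romik asymptotic for $p_{\leq t\sqrt{n}}(n)$. Note that what is actually needed is weaker than concentration of $m$ at scale $\sqrt{n}\log n$: it suffices that $m=\omega(\sqrt{n})$ with high enough probability, since then the plateau length satisfies $r\leq n/m=o(\sqrt{n})$ and the peak vertices fall inside the $\epsilon$-neighborhood of the $y$-axis, which is what prevents the degenerate (flat-topped) limit. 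In your tilted-measure framework this step is in fact easier than in the paper: under $P_{q_n}$ the event that all parts on both sides are $\leq t\sqrt{n}$ has probability roughly $\prod_{\ell>t\sqrt{n}}(1-q_n^\ell)^2=e^{-c_t\sqrt{n}}$, which comfortably survives the $n^{-3/4}$ deconditioning --- but you must actually carry this out (and verify that conditioning on $m$ only truncates the geometrics at heights irrelevant to the range $y\geq\epsilon$), rather than leave it as an anticipated obstacle.
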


\begin{remark}
The limit shape of Theorem \ref{T:S} also holds for ``unimodal sequences with summits'', which are distinguished from unrestricted unimodal sequences by designating one peak as the ``summit''.  These were called ``stacks with summits'' in \cite{BM}, and the number of unimodal sequences with summits was denoted by $ss(n)$.  In particular, we have $ss(n) \sim s(n)$ (see \cite{BM}).  It is straightforward to repeat our proof of Theorem \ref{T:S} for unimodal sequences with summits with very little change.
\end{remark}

\begin{remark}
It is not surprising that the limit shapes for unrestricted and strongly unimodal sequences are made of two halves of the limit shapes for unrestricted and distinct parts partitions (see \cite{V}, Th. 4.4 and Th. 4.5).  In particular, each half of the limit shape for unrestricted unimodal sequences is the curve \eqref{Vershikscurve} scaled down so that the area beneath it is $\frac{1}{2}$.  \end{remark}

\begin{theorem}[Semi-strict Unimodal Sequences]\label{T:DM}
Let $\epsilon > 0$ be arbitrary and let
$$
f_{dm}(x):= \begin{cases} -\frac{2}{\pi}\log\left(e^{-\frac{\pi}{2}x}-1\right) & \text{if $x \in \left[-\frac{2}{\pi}\log 2, 0\right)$,} \\ -\frac{2}{\pi}\log\left( 1-e^{-\frac{\pi}{2}x}\right) & \text{if $x >0$.} \end{cases}
$$
Then
$$
\lim_{n \to \infty} \frac{1}{dm(n)} \cdot \#\left\{\lambda \in \D_m(n) : \widetilde{\varphi}(\lambda) \subset N_{\epsilon}(f_{dm})\right\} = 1.
$$
\end{theorem}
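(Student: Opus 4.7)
The plan is to adapt the Boltzmann-tilted saddle-point strategy used in the proofs of Theorems \ref{T:D} and \ref{T:S}. Every $\lambda \in \D_m(n)$ decomposes uniquely as a triple $(N, \mu, \nu)$ where $N = \lambda_k$ is the unique peak, $\mu = (\lambda_1 < \cdots < \lambda_{k-1})$ is a strict partition with parts in $\{1, \dots, N-1\}$, and $\nu = (\lambda_{k+1} \geq \cdots \geq \lambda_s)$ is an unrestricted partition with parts in $\{1, \dots, N-1\}$. Consequently the generating function factors as
\[
DM(q) \;=\; \sum_{N\geq 1} q^N \prod_{j=1}^{N-1}(1+q^j)\prod_{j=1}^{N-1}\frac{1}{1-q^j}.
\]
Define the Boltzmann measure $\mathbb{P}_\tau(\lambda) := q^{|\lambda|}/DM(q)$ with $q = e^{-\tau}$ and calibrate $\tau = \tau_n > 0$ by $\mathbb{E}_{\tau_n}[|\lambda|] = n$; a standard saddle-point computation gives $\tau_n \sim \pi/(2\sqrt{n})$, consistent with the exponent $\pi/2$ appearing in $f_{dm}$. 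Since $\mathbb{P}_{\tau_n}(\,\cdot \mid |\lambda|=n)$ is the uniform measure on $\D_m(n)$, it suffices to prove concentration of the renormalized shape under $\mathbb{P}_{\tau_n}$ and then transfer via the local CLT on $|\lambda|$, which here gives $\mathbb{P}_{\tau_n}(|\lambda|=n) = \Theta(n^{-3/4})$.

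The factorization of $DM(q)$ gives a joint decomposition under $\mathbb{P}_\tau$: the peak satisfies $\mathbb{P}_\tau(\text{peak}=N) \propto A_N(q) := q^N \prod_{j<N}(1+q^j)(1-q^j)^{-1}$, and conditionally on $N$ the parts of $\mu$ are independent Bernoullis with $\mathbb{P}(j \in \mu) = q^j/(1+q^j)$, while the multiplicities of $\nu$ are independent geometrics of parameter $q^j$, for $1 \leq j \leq N-1$. The crux is to show that the peak is atypically large. Logarithmic differentiation in $N$ yields $\partial_N \log A_N = -\tau + \log\frac{1+q^N}{1-q^N}$, which vanishes at $q^{N^\ast} = \tanh(\tau/2)$, i.e.\ $N^\ast \sim \log(2/\tau)/\tau = \Theta(\sqrt{n}\log n)$. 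A second-derivative estimate ($\partial_N^2 \log A_N \asymp -\tau^2$ near $N^\ast$) shows that $N$ concentrates on the Gaussian scale $\Theta(\sqrt{n})$ around $N^\ast$; in particular $N\tau_n \to \infty$ in probability, so the cutoff $j < N$ on the parts of $\mu$ and $\nu$ becomes asymptotically inactive at the resolution $\Theta(\sqrt{n})$ at which the limit shape is measured.

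With the cutoff effectively removed, the Petrov concentration arguments used for Theorems \ref{T:D} and \ref{T:S} apply verbatim to $\mu$ and $\nu$. Routine computations give $\mathbb{E}[|\mu|] \sim \pi^2/(12\tau_n^2) \sim n/3$ and $\mathbb{E}[|\nu|] \sim \pi^2/(6\tau_n^2) \sim 2n/3$, both concentrated on these values; the renormalized upper boundary of $\mu$ converges to the distinct-partition Vershik curve scaled to enclose area $1/3$, which is exactly the left branch of $f_{dm}$, and the renormalized upper boundary of $\nu$ converges to the unrestricted-partition Vershik curve scaled to area $2/3$, which is the right branch. The single peak column has renormalized width $1/\sqrt{n} < \epsilon$ and is absorbed into the $\epsilon$-neighborhood of the $y$-axis in $N_\epsilon(f_{dm})$. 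A union bound over these failure events (each of probability $o(n^{-3/4})$) and the transfer via the local CLT complete the proof. The main obstacle is the anomalous scaling of the peak: unlike in Theorems \ref{T:D} and \ref{T:S} where every quantity sits at scale $\Theta(\sqrt{n})$, here $N \sim \sqrt{n}\log n$ with fluctuations $\Theta(\sqrt{n})$ comparable to the limit-shape scale itself, so one must verify carefully that these fluctuations do not perturb the conditional Bernoulli/geometric statistics near $j = N$; the key point is that the contributions of parts $j$ near $N$ are exponentially small in $N\tau \gg 1$, uniformly in the typical window of $N$.
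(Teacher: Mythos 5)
Your proposal is correct in outline and reaches the theorem by a genuinely different packaging than the paper. You work in the grand-canonical (Boltzmann/Fristedt) ensemble with $q=e^{-\tau_n}$, $\tau_n\sim\frac{\pi}{2\sqrt n}$, exploit the exact product structure of $\D_m$ conditionally on the peak, and transfer to the uniform measure on $\D_m(n)$ by a local limit estimate; the paper instead stays canonical throughout, bounding coefficient ratios directly via Lemma \ref{L:DM} (the statement $q^{-n}\D_m(q)/dm(n)=e^{o(\sqrt n)}$, which is all that is needed -- your local CLT claim $\mathbb{P}_{\tau_n}(|\lambda|=n)=\Theta(n^{-3/4})$ is stronger than necessary and is asserted rather than proved, so you should either prove it or replace it by this weaker, easy estimate, since your failure events are $e^{-c\sqrt n}$ anyway). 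The more substantive divergence is the peak. The paper's Lemma \ref{L:DMpeaks} shows $dm_{\leq t\sqrt n}(n)=o(dm(n))$ by injecting peak-bounded sequences into pairs in $\mathscr{P}_{\leq k}(m)\times\mathscr{Q}_{\leq k}(n-m)$ and invoking Szekeres--Canfield--Romik asymptotics for $p_{\leq k}$ together with the author's separate result for distinct parts of bounded size; your route analyzes the exact marginal $A_N(q)=q^N\prod_{j<N}\frac{1+q^j}{1-q^j}$, locating $N^*\sim\frac{1}{\tau}\log\frac{2}{\tau}\sim\frac{\sqrt n\log n}{\pi}$, which is cleaner and self-contained. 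One caveat there: what you actually need is the large-deviation bound $\mathbb{P}_{\tau_n}(\text{peak}\leq t\sqrt n)\leq e^{-c(t)\sqrt n}$ (so that it survives division by $\mathbb{P}_{\tau_n}(|\lambda|=n)$), and the Gaussian second-derivative estimate near $N^*$ does not by itself give this at distance $\asymp\sqrt n\log n$ from $N^*$; it does, however, follow directly from the explicit formula, since $\log A_{t\sqrt n}-\log A_{N^*}\approx-\frac{1}{\tau}\int_{\pi t/2}^{\infty}\log\frac{1+e^{-u}}{1-e^{-u}}\,du+O(\log n)=-\Theta(\sqrt n)$, so you should state the bound in this form. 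With these two adjustments, your conditional Bernoulli/geometric description, the computation $\mathbb{E}|\mu|\sim n/3$, $\mathbb{E}|\nu|\sim 2n/3$, the observation that the cutoff $j<N$ affects only $O(1)$ parts in expectation, and the Chernoff-type vertex bounds (which are the probabilistic form of the paper's $U(\tau)$ estimates in Sections \ref{S:D} and \ref{S:S}) complete the argument; what your approach buys is an explicit description of the peak location and independence structure, while the paper's approach stays elementary and avoids any appeal to local limit behavior, at the cost of importing restricted-partition asymptotics for the peak lemma.
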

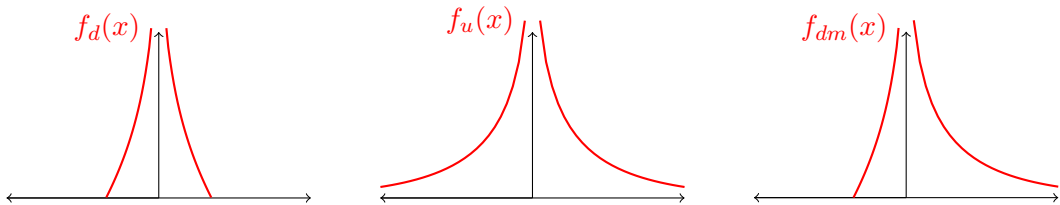
\begin{figure}
\begin{tikzpicture}[scale=1,domain=-2:2]

    \draw[->] (-2,0) -- (2,0);
    \draw[->] (0,0) -- (0,2.2);
    \draw[<-] (-2,0) -- (0,0);
    \draw[thick, color=red]   plot[domain=.1:.69] (\x,{  -ln(exp(\x )-1)})   ;
    \draw[thick, color=red]   plot[domain=-.69:-.1] (\x,{  -ln(exp(-\x )-1)})  node[left] {$f_d(x)$} ;
\end{tikzpicture} \qquad
\begin{tikzpicture}[scale=1,domain=-2:2]

    \draw[->] (-2,0) -- (2,0);
    \draw[->] (0,0) -- (0,2.2);
    \draw[<-] (-2,0) -- (0,0);
    \draw[thick, color=red]   plot[domain=.1:2] (\x,{  -ln(1-exp(-\x ))})   ;
    \draw[thick, color=red]   plot[domain=-2:-.1] (\x,{  -ln(1-exp(\x ))})  node[left] {$f_u(x)$} ;
\end{tikzpicture}  \qquad
\begin{tikzpicture}[scale=1,domain=-2:2]

    \draw[->] (-2,0) -- (2,0);
    \draw[->] (0,0) -- (0,2.2);
    \draw[<-] (-2,0) -- (0,0);
    \draw[thick, color=red]   plot[domain=.1:2] (\x,{  -ln(1-exp(-\x))})    ;
    \draw[thick, color=red]   plot[domain=-ln(2):-.1] (\x,{  -ln(exp(-\x)-1)}) node[left]  {$f_{dm}(x)$} ;
\end{tikzpicture}
\caption{Respective limit shapes for strongly, unrestricted and semi-strict unimodal sequences}
\label{limitshapes}
\end{figure}

\begin{remark} 
We observe that the left-half of $f_{dm}(x)$ is the limit shape for distinct parts partitions scaled so that the area beneath it is $\frac{1}{3}$ (\cite{V}, Th. 4.5), while the right half is the limit shape for unrestricted partitions scaled so that the area beneath it is $\frac{2}{3}$.  We discuss the appearance of these constants further in Section \ref{S:C}.
\end{remark}

Our proofs of the main results are structured as follows.  Following a method of Petrov (\cite{P}, \S 6), we will obtain limit shapes for the left and right halves ``in isolation'', showing that as $n \to \infty$, $0\%$ of left (resp. right) halves of shapes are {\it not} in an $\epsilon$ neighborhood of some left (resp. right) limit shape.  For Theorem \ref{T:D}, this is enough to complete the proof.  However, for Theorems \ref{T:S} and \ref{T:DM}, we will need to analyze peaks more closely; we will show that, on average, peaks are $\omega(\sqrt{n})$, so that a degenerate limit shape without a vertical asymptote does not occur.

\subsection{Some Consequences of Theorems \ref{T:D}-\ref{T:DM}}\label{S:C}

 The most natural consequences of Theorems \ref{T:D}-\ref{T:DM} concern the structure of unimodal sequences of size $n$ at the scale of $\sqrt{n}$.  For example, Theorem \ref{T:D} implies the following corollary concerning the number of parts in strongly unimodal sequences.
 
\begin{corollary}
Let $\epsilon > 0$ be arbitrary.  The number of parts of $100\%$ of strongly unimodal sequences of size $n$ as $n \to \infty$ lies in the interval $\sqrt{n}\left(2\frac{\sqrt{6}}{\pi}\log 2 - \epsilon, 2\frac{\sqrt{6}}{\pi}\log 2 + \epsilon \right)$.
\end{corollary}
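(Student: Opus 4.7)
My strategy is to transfer the limit shape of Theorem~\ref{T:D} to the number-of-parts statistic by observing that for a strongly unimodal $\lambda$ the diagram has horizontal width exactly $\ell(\lambda)$, so in renormalized coordinates $\widetilde{\varphi}(\lambda)$ has horizontal support of length $\ell(\lambda)/\sqrt{n}$, while $f_d$ is supported on $[-\tfrac{\sqrt{6}}{\pi}\log 2,\tfrac{\sqrt{6}}{\pi}\log 2]$, an interval of length exactly $2\tfrac{\sqrt{6}}{\pi}\log 2$. The plan is to apply Theorem~\ref{T:D} with some $\epsilon' < \epsilon/5$ and unpack the containment $\widetilde{\varphi}(\lambda) \subset N_{\epsilon'}(f_d)$ to pinch the support length from both sides.

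For the upper bound, strong unimodality yields $\lambda_j \geq j$ for $j$ up to the peak position $k$. Choosing $j_0 = \lceil \epsilon'\sqrt{n}\rceil$, the top-left vertex of the $j_0$-th column lies on the shape at $\bigl((j_0-k)/\sqrt{n}, \lambda_{j_0}/\sqrt{n}\bigr)$ with $y$-coordinate above $\epsilon'$, hence outside the $x$-axis component of $N_{\epsilon'}$. The containment then forces the point within horizontal distance $\epsilon'$ of the graph of $f_d$ or within $\epsilon'$ of the $y$-axis, and in either alternative one extracts $k/\sqrt{n} \leq \tfrac{\sqrt{6}}{\pi}\log 2 + 2\epsilon'$. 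The symmetric argument on the right yields $\ell(\lambda)/\sqrt{n} \leq 2\tfrac{\sqrt{6}}{\pi}\log 2 + 4\epsilon' < 2\tfrac{\sqrt{6}}{\pi}\log 2 + \epsilon$.

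For the lower bound I would track the leftmost $x$-coordinate $x_L(y)$ at which $\widetilde{\varphi}(\lambda)$ attains height $y$; this is nondecreasing in $y$ by monotonicity of the left branch. Applying the containment at the shape point $(x_L(\epsilon'), \epsilon')$, whose $y$-coordinate already exceeds the $x$-axis threshold, gives either $x_L(\epsilon') \in [\alpha_L(\epsilon') - \epsilon', \alpha_L(\epsilon') + \epsilon']$, where the inverse of the left branch $\alpha_L(y) := -\tfrac{\sqrt{6}}{\pi}\log(1+e^{-\pi y/\sqrt{6}})$ satisfies $\alpha_L(\epsilon') = -\tfrac{\sqrt{6}}{\pi}\log 2 + O(\epsilon')$, or $|x_L(\epsilon')| \leq \epsilon'$. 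Since $-w_- = x_L(0) \leq x_L(\epsilon')$ by monotonicity, the first alternative immediately gives $w_- \geq \tfrac{\sqrt{6}}{\pi}\log 2 - O(\epsilon')$, and the symmetric conclusion on the right yields $\ell(\lambda)/\sqrt{n} \geq 2\tfrac{\sqrt{6}}{\pi}\log 2 - \epsilon$.

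The main obstacle I foresee is the second, ``degenerate'' alternative $|x_L(\epsilon')| \leq \epsilon'$: strong unimodality with $\lambda_j \geq j$ then forces $k \leq 2\epsilon'\sqrt{n}$, corresponding to a very narrow tall-spike diagram that genuinely satisfies the containment via the $y$-axis component of $N_{\epsilon'}(f_d)$. These sequences must be excluded separately, and the cleanest route is to extract concentration of $k$ from the Fristedt/Petrov-style product-measure computation underlying the proof of Theorem~\ref{T:D}---under that framework $k$ has mean $\sim \tfrac{\sqrt{6}}{\pi}\log 2 \cdot \sqrt{n}$ with standard deviation much smaller than $\sqrt{n}$, so the degenerate family contributes $o(d(n))$ and can be discarded, completing the argument.
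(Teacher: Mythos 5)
Your upper bound is fine: the inequality $\lambda_j \geq j$ (and its mirror image on the right) plus the containment $\widetilde{\varphi}(\lambda)\subset N_{\epsilon'}(f_d)$ does pin the width from above, and your reduction of the lower bound to the two alternatives ``vertex near the left branch'' versus ``$|x_L(\epsilon')|\leq \epsilon'$, i.e.\ $k\leq 2\epsilon'\sqrt{n}$'' is also sound (the third alternative, proximity to the right branch, is vacuous because the centering puts the peak at $x=0$). You have correctly identified the real issue, which the paper glosses over entirely (the corollary is stated as an immediate consequence of Theorem \ref{T:D}, with no proof): shapes hugging the $y$-axis, such as $\lambda=(n)$, genuinely lie in $N_{\epsilon}(f_d)$, so the theorem alone cannot force the lower bound on the number of parts.

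The gap is in how you dispose of that degenerate family. You propose to ``extract concentration of $k$ from the Fristedt/Petrov-style computation underlying the proof of Theorem \ref{T:D}'', asserting a mean $\sim \frac{\sqrt{6}}{\pi}\log 2\cdot\sqrt{n}$ and standard deviation $o(\sqrt{n})$, but no such statement is proved in the paper and it does not drop out of the Section \ref{S:D} machinery: the deviation estimate there (see \eqref{dsumineq3}) bounds the proportion of $\lambda$ with exactly $a$ left parts $\leq b$ \emph{and peak at least $b+1$}; the factorization trick requires the $z$-product to be independent of the peak, so without an a priori lower bound on the peak you cannot convert it into concentration of the total number of left parts, and no peak-size lemma for $\D(n)$ (the analogue of Lemmas \ref{L:Speaks} and \ref{L:DMpeaks}) appears in the paper. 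Moreover, the variance claim would itself need a second-moment computation you do not supply. To close the argument you need an actual counting estimate for the degenerate family, e.g.\ bound the number of $\lambda\in\D(n)$ with at most $2\epsilon'\sqrt{n}$ parts on one side by $\sum_m (\text{distinct partitions of } m \text{ with } O(\epsilon'\sqrt{n}) \text{ parts})\cdot q(n-m)$ and use Szekeres-type asymptotics (as in \cite{S51}, or the bounds used in Lemmas \ref{L:Speaks} and \ref{L:DMpeaks}) to see this is $e^{(C+o(1))\sqrt{n}}$ with $C<\frac{2\pi}{\sqrt{6}}$, hence $o(d(n))$; alternatively, prove a $\D(n)$ analogue of Lemma \ref{L:Speaks} showing peaks exceed $y_0\sqrt{n}$ for $100\%$ of sequences and then apply the Section \ref{S:D} deviation bound at the fixed height $b=y_0\sqrt{n}$. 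As written, the final step of your proposal is an appeal to an unproven statement rather than a proof.
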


We leave the statement of similar corollaries to the reader.

Recall that the {\it rank} of a semi-strict unimodal sequence is the number of parts to the right of the peak minus the number of parts to the left of the peak.  Bringmann--Jennings-Shaffer--Mahlburg proved that the limiting distribution of this statistic is a point mass with mean $\frac{\sqrt{n}\log n}{\pi}$ (\cite{BJSM}, Prop. 1.2 part (3)).  Theorem \ref{T:DM} anticipates this result. 

After Theorem \ref{T:DM}, we see that a typical semi-strict unimodal sequence of size $n$ is made up of a distinct parts partition of size roughly $\frac{n}{3}$ and an unrestricted partition of size roughly $\frac{2n}{3}$.  It follows from Theorem \ref{T:DM} that this distinct parts partition has roughly $\frac{\sqrt{6}}{\pi}\log (2) \sqrt{n}$ parts.  Erd\H{o}s-Lehner proved that a typical partition of size $m$ has roughly $\frac{\sqrt{3}}{\pi\sqrt{2}}\sqrt{m}\log m$ parts (\cite{EL}, Th. 1.1).  Hence, we should expect the limiting rank of $100\%$ semi-strict unimodal sequences to be 
$$
\frac{\sqrt{3}}{\pi\sqrt{2}}\sqrt{\frac{2n}{3}}\log \left(\frac{2n}{3}\right)-\frac{\sqrt{6}}{\pi}\log (2) \sqrt{n} \sim \frac{\sqrt{n}\log n}{\pi},
$$
as proved in \cite{BJSM} using the Method of Moments.

Theorem \ref{T:DM} also leads to a limit shape for overpartitions, combinatorial objects having many similarities to classical partitions.  As defined in \cite{CL}, an overpartition is a partition in which the last occurrence of a part may (or may not) be marked.  For example, the overpartitions of size $3$ are $(\overline{3})$, $(3)$, $(\overline{2}, \overline{1})$, $(\overline{2}, 1)$, $(2, \overline{1})$, $(2,1)$, $(1,1,\overline{1})$, $(1,1,1)$.  We denote the set of overpartitions of size $n$ by $\overline{\mathscr{P}}(n)$ with cardinality $\overline{p}(n)$.

From \cite{A13} equation 1.7, we have the generating function identity
$$
\frac{1+q}{q}\sum_{n \geq 1} dm(n)q^n=\prod_{j \geq 1} \frac{1+q^j}{1-q^j}= \sum_{n \geq 0} \overline{p}(n)q^n,
$$
thus $dm(n+1)+dm(n)=\overline{p}(n)$.  We now give a short bijective proof of this equality and use it to derive a limit shape for overpartitions.  If $\lambda \in \D_m(n)$, let the peak and parts to its left be marked.  If $\lambda \in \D_m(n+1)$, let only the parts to its left be marked and subtract 1 from the peak.

If we plot diagrams for overpartitions as Vershik does for partitions in \cite{V}---in the first quadrant as weakly decreasing columns of squares and without distinguishing marked parts---then our bijection leads to a map between diagrams of semi-strict unimodal sequences and a transfer of limit shapes.  Since $dm(n+1) \sim dm(n)$, it is easy to see that a limit shape for overpartitions is obtained immediately by adding horizontal components of the limit shape for semi-strict unimodal sequences.

\begin{corollary}[Overpartitions]\label{C:Pbar}
Let $\epsilon > 0$ be arbitrary and let $(g)^{-1}$ denote the inverse function of $g$, so
$$
f_{\overline{p}}(x):= \left(\left(-\frac{2}{\pi}\log\left( 1-e^{-\frac{\pi}{2}x}\right)\right)^{-1}-\left(-\frac{2}{\pi}\log\left(e^{-\frac{\pi}{2}x}-1\right)\right)^{-1}\right)^{-1}=  \frac{2}{\pi} \log\left(\frac{1+e^{-\frac{\pi}{2}x}}{1-e^{-\frac{\pi}{2}x}}\right).
$$
Then
$$
\lim_{n \to \infty} \frac{1}{\overline{p}(n)} \left\{\lambda \in \overline{\mathscr{P}}(n) : \widetilde{\varphi}(\lambda) \subset N_{\epsilon}(f_{\overline{p}}) \right\} = 1.
$$
\end{corollary}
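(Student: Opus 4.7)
The plan is to transfer the limit shape of Theorem~\ref{T:DM} along the bijection
\[
\D_m(n) \sqcup \D_m(n+1) \longleftrightarrow \overline{\mathscr{P}}(n)
\]
described immediately before the corollary. Since $dm(n)+dm(n+1)=\overline{p}(n)$ and $dm(n+1)\sim dm(n)$, the uniform measure on $\overline{\mathscr{P}}(n)$ pulls back to a convex combination of the uniform measures on $\D_m(n)$ and $\D_m(n+1)$ with weights tending to $\tfrac12$ each, so it suffices to transfer the conclusion of Theorem~\ref{T:DM} from each of these two summands.

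The geometric content of the bijection is this: if $\mu\in\overline{\mathscr{P}}(n)$ comes from $\lambda\in\D_m(m)$ with $m\in\{n,n+1\}$, then the Vershik-style diagram of $\mu$ (weakly decreasing columns in the first quadrant, markings suppressed) is obtained by sorting the columns of the diagram of $\lambda$ in decreasing order of height, possibly after removing one unit square from the peak in the $m=n+1$ case. The key observation is that sorting preserves horizontal widths at every height: writing $L_\lambda(y), R_\lambda(y)$ for the widths of the left and right halves of $\lambda$'s diagram at height $y$, the diagram of $\mu$ has width $L_\lambda(y)+R_\lambda(y)$ at height $y$, up to a discrepancy of one unit.

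To apply Theorem~\ref{T:DM}, I would invert the two branches of $f_{dm}$ to get the widths
\[
\ell(y):=\tfrac{2}{\pi}\log\bigl(1+e^{-\pi y/2}\bigr),\qquad r(y):=-\tfrac{2}{\pi}\log\bigl(1-e^{-\pi y/2}\bigr),
\]
and note that $\ell(y)+r(y)=\tfrac{2}{\pi}\log\bigl((1+e^{-\pi y/2})/(1-e^{-\pi y/2})\bigr)=f_{\overline{p}}(y)$. A direct computation shows that $f_{\overline{p}}$ is self-inverse, so for a partition diagram, being close to $f_{\overline{p}}$ in the sense of $N_\epsilon$ is equivalent to its width-at-height function being close to $f_{\overline{p}}$. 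If $\widetilde{\varphi}(\lambda)\subset N_\epsilon(f_{dm})$, then the renormalized half-widths of $\lambda$ are within $\epsilon$ of $\ell$ and $r$ at each height, so the renormalized width of $\widetilde{\varphi}(\mu)$ is within $2\epsilon$ of $f_{\overline{p}}$, i.e.\ $\widetilde{\varphi}(\mu)\subset N_{2\epsilon}(f_{\overline{p}})$. Since $100\%$ of $\lambda\in\D_m(m)$ satisfy the hypothesis by Theorem~\ref{T:DM}, $100\%$ of $\mu\in\overline{\mathscr{P}}(n)$ satisfy the conclusion, and the corollary follows upon letting $\epsilon\to 0$.

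The main (modest) technical obstacle is book-keeping at the boundary of the shape's support: the definition of $N_\epsilon$ includes $\epsilon$-strips along the axes to absorb the vertical asymptotes of $f_{dm}$ at $x=0$ and $x=-\tfrac{2}{\pi}\log 2$, and one must verify that a column taller than $O(1)\cdot\sqrt{n}$ in either half of $\lambda$ still maps to a column of $\mu$ falling inside the $\epsilon$-neighborhood of the $y$-axis in $N_\epsilon(f_{\overline{p}})$. The $O(1/\sqrt{n})$ discrepancy from $\sqrt{n+1}$ versus $\sqrt{n}$ and from the single-unit peak adjustment is $o(1)$ at the renormalized scale and is absorbed into $\epsilon$.
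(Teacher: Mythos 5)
Your proposal is correct and follows essentially the same route as the paper: decompose $\overline{\mathscr{P}}(n)$ via the bijection with $\D_m(n)\sqcup\D_m(n+1)$, use $dm(n+1)\sim dm(n)$, and transfer Theorem \ref{T:DM} by adding horizontal widths at each height, which is exactly how $f_{\overline{p}}$ is built from the two branches of $f_{dm}$. The paper states this transfer only in a few lines, so your additional book-keeping (the $2\epsilon$ loss, the axis strips, and the one-square peak adjustment) simply fills in details the paper leaves implicit.
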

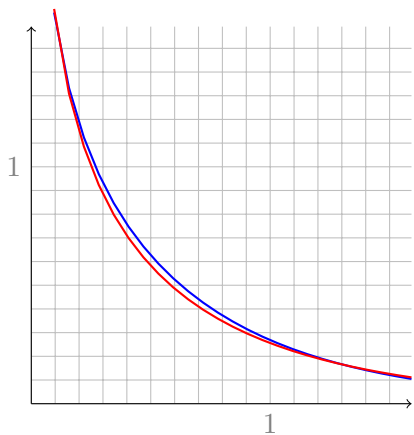
\begin{figure}
 \begin{tikzpicture}[scale=2,domain=0:2]

    \draw[thin, color=lightgray, step=.1570796] (0,0) grid (2.5,2.5);
    \draw[thin, color=gray] (1.570796,0) node[below] {$1$};
    \draw[thin, color=gray] (0,1.570796) node[left] {$1$};
    \draw[->] (0,0) -- (2.5,0);
    \draw[->] (0,0) -- (0,2.5);
   \draw[thick, color=blue]   plot[domain=.15:2.5] (\x,{  ln((1+exp(-\x))/(1-exp(-\x)))}) ;
   \draw[thick, color=red]   plot[domain=.15:2.5] (\x,{  1.2*-ln(1-exp(-.8*\x))});
    \end{tikzpicture}
    \caption{Limit shapes for overpartitions, {\color{blue} $f_{\overline{p}}(x)$}, and unrestricted partitions, {\color{red} $f_p(x)$}}
    \label{overpartitionshape}
    \end{figure}

If we represent marked parts in a diagram by shading the top square, we see that conjugation is also an involution on overpartitions.  And indeed, one checks that $y=f_{\overline{p}}(x)$ is symmetric in $x$ and $y$.

\begin{remark}\label{R:overpartitionsweight}
In \cite{CH}, Corteel-Hitczenko proved that the expected weight of overlined parts is asymptotic to $\frac{n}{3}$.  In view of Theorem \ref{T:DM} and the map between semi-strict unimodal sequences and overpartitions, we obtain the following refinement: For any $\epsilon > 0$ and ``for $100\%$ of overpartitions as $n \to \infty$'', the total weight of marked parts lies between $\frac{n}{3}-\epsilon\sqrt{n}$ and $\frac{n}{3}+\epsilon\sqrt{n}$.
\end{remark}

\begin{remark}
In Figure \ref{overpartitionshape}, we see that the limit shapes $f_p(x)$ and $f_{\overline{p}}(x)$ intersect at $x=.1398 \dots$ and $x=1.4088\dots$.   It would be interesting to give a more direct combinatorial explanation that overpartitions tend to ``bulge out'' more than partitions near the middle of their diagrams.  And more generally, it would be interesting to see a systematic study of how to alter/restrict partitions to achieve a desired geometric effect on the limit shape.
\end{remark}

The rest of the paper is structured as follows.  In Section \ref{S:notation}, we briefly introduce some notation.  In Section \ref{S:D}, we give a full proof of Theorem \ref{T:D}.  In Sections \ref{S:S} and \ref{S:DM}, respectively, we outline the proofs of Theorems \ref{T:S} and \ref{T:DM}, highlighting an additional technical difficulty that does not appear in Section \ref{S:D}.

\section*{Acknowledgements}

The author thanks Igor Pak for clarifying some of the history of limit shapes for partitions.

\section{Notation}\label{S:notation}

A renormalized shape $\widetilde{\varphi}(\lambda)$ consists of line segments that meet at $90^{\circ}$ corners, which we will call {\it vertices} (see Figure \ref{vertices}).  Let $V_{p}(\lambda)$ be the set of {\it peak vertices}, namely those bounding the top edge of $\widetilde{\varphi}(\lambda)$.  Let $V_{\ell}(\lambda)$ be the set of left vertices to the left of peak vertices, but not including the vertex on the $x$-axis.  Similarly, let $V_r(\lambda)$ be the set of right vertices.  Recall that $N_{\epsilon}$ contains an $\epsilon$-neighborhood of the $x$- and $y$-axes.  Thus, for $f \in \{f_d, f_{s}, f_{dm}\}$, we have $\widetilde{\varphi}(\lambda) \subset N_{\epsilon}(f)$ if and only if $V_{\ell}(\lambda) \cup V_r(\lambda) \cup V_p(\lambda) \subset N_{\epsilon}(f)$, and to prove Theorem \ref{T:D} it suffices to show
\begin{equation}\label{E:Vjlim}
\lim_{n \to \infty} \frac{1}{d(n)}\cdot \#\left\{ \lambda \in \D(n) : V_{j}(\lambda) \not\subset N_{\epsilon}(f_d)\right\} = 0,
\end{equation}
where $j \in \{\ell, r, p\}$, and we may prove Theorems \ref{T:S} and \ref{T:DM} similarly. \\ \

\begin{figure}
\begin{tikzpicture}[scale=.7, line width=1pt]
  \draw (-2,0) grid (7,1);
  \draw (1,2) grid (6,1);
  \draw (2,3) grid (6,1);
   \draw (2,4) grid (6,3);
    \draw (3,5) grid (5,3);
    \draw[->] (3,0) -- (3,7);
    \draw[->] (0,0) -- (8, 0);
    \draw[<-] (-3,0) -- (0, 0);
     \draw[color=red, line width= 2pt] (-2,0) -- (-2,1) -- (1,1) -- (1,2) -- (2,2) -- (2,4)-- (3,4) -- (3,5)-- (5,5) -- (5,4) -- (6,4) -- (6,1) -- (7,1) -- (7,0);
    \draw  (1,3) -- (1,3) node[anchor=east] {\Large {\color{ao(english)} $V_{\ell}(\lambda)$}};
    \node at (-2,1) [circle,fill, inner sep=2 pt, color=ao(english)]{};
    \node at (1,1) [circle,fill, inner sep=2 pt, color=ao(english)]{};
    \node at (1,2) [circle,fill, inner sep=2 pt, color=ao(english)]{};
    \node at (2,2) [circle,fill, inner sep=2 pt, color=ao(english)]{};
    \node at (2,4) [circle,fill, inner sep=2 pt, color=ao(english)]{};
    \node at (3,4) [circle,fill, inner sep=2 pt, color=ao(english)]{};
    \node at (5,4) [circle,fill, inner sep=2 pt, color=blue]{};
    \node at (6,4) [circle,fill, inner sep=2 pt, color=blue]{};
    \node at (6,1) [circle,fill, inner sep=2 pt, color=blue]{};
    \node at (7,1) [circle,fill, inner sep=2 pt, color=blue]{};
    \node at (5,5) [circle,fill, inner sep=2 pt, color=violet]{};
    
    \node at (3,5) [circle,fill, inner sep=2 pt, color=violet]{};
    \draw  (8,3) -- (8,3) node[anchor=east] {\Large {\color{blue} $V_{r}(\lambda)$}};
    \draw (4,5) -- (4,5) node[anchor=south] {\Large {\color{violet} $V_p(\lambda)$}};
 \end{tikzpicture}
\caption{Renormalized shape ${\color{red} \widetilde{\varphi}(\lambda)}$, left vertices {\color{ao(english)} $V_{\ell}(\lambda)$} right vertices {\color{blue} $V_r(\lambda)$}, and peak vertices {\color{violet} $V_p(\lambda)$} for the unimodal sequence $\lambda=(1,1,1,2,4,5,5,4,1)$.}
\label{vertices}
\end{figure}

Also, throughout we will use the notation $\{z^aq^n\} S(z,q)$ to denote the coefficient of $z^aq^n$ in the series $S(z,q)$.  Throughout $c$ be some locally defined constant.

\section{Proof of Theorem \ref{T:D}}\label{S:D}

We will find it easier to analyze the left part of the shape after translating into the first quadrant.  Let
$$
\overline{f_d}(x):= f_d  \left(x- \frac{\sqrt{6}}{\pi}\log(2) \right) = - \frac{\sqrt{6}}{\pi} \log \left(2e^{-\frac{\pi}{\sqrt{6}}x}-1 \right) \qquad \text{for $x \in \left[0, \frac{\sqrt{6}}{\pi}\log(2) \right)$.}
$$
We will also make use of the inverse function for $\overline{f_d}$, namely
$$
\overline{g_d}(y):= \frac{\sqrt{6}}{\pi}\left(\log(2)-\log\left(1+e^{-\frac{\pi}{\sqrt{6}}y}\right) \right) \qquad \text{for $y \in [0, \infty)$.}
$$

Let $\overline{V_{\ell}}(\lambda)$ be the left vertices after translating $\widetilde{\varphi}(\lambda)$ to the right so that the left-most vertex lies on the $y$-axis.  We want to show
\begin{equation}\label{Vllim}
\lim_{n \to \infty} \frac{1}{d(n)} \cdot \# \left\{ \lambda \in \D(n): \overline{V_{\ell}}(\lambda) \not\subset N_{\epsilon}(\overline{f_d}) \right\} = 0.
\end{equation}

We have the following inequalities.
\begin{align}
& \#\left\{ \lambda \in \D(n): V_{\ell}(\lambda) \not\subset N_{\epsilon}(f_d) \right\} \nonumber \\
&\leq \sum_{1 \leq a < \sqrt{2n}} \#\left\{\lambda \in \D(n) : \frac{1}{\sqrt{n}} (a,b) \in \overline{V_{\ell}}(\lambda),  \ \left|\overline{g_d}\left(\frac{b}{\sqrt{n}} \right) - \frac{a}{\sqrt{n}} \right| > \epsilon \right\}  \label{dsumineq2}\\
&\leq \sum_{1 \leq a < \sqrt{2n}} 2\cdot \#\left\{\lambda \in \D(n) : \lambda \ \text{has exactly $a$ left parts $\leq b$ with}  \ \left|\overline{g_d}\left(\frac{b}{\sqrt{n}} \right) - \frac{a}{\sqrt{n}} \right| > \epsilon, \right.  \nonumber \\ & \hspace{30mm}  \ \text{and the peak is $\geq b+1$} \bigg\}  \label{dsumineq3}
\end{align}
\eqref{dsumineq2} follows from the definition of $N_{\epsilon}$, and \eqref{dsumineq3} is easy to see geometrically.  After multiplying \eqref{dsumineq3} by $\frac{1}{d(n)}$, we will show that each summand is $e^{-C\sqrt{n}+ o(\sqrt{n})}$, where $C>0$ is independent of $a$.  It then follows that $$\lim_{n \to \infty} \frac{1}{d(n)} \cdot \# \left\{ \lambda \in \D(n): \overline{V_{\ell}}(\lambda) \not\subset N_{\epsilon}(\overline{f_d}) \right\} \leq \lim_{n \to \infty} 2\sqrt{2n} \cdot e^{-C\sqrt{n}+o(\sqrt{n})} = 0,$$ so \eqref{Vllim} holds.

Let $\D(q):= \sum_{m \geq 0} d(m)q^m$.  Since $d(n)$ is the $n$-th coefficient, we clearly have
\begin{equation}\label{Dcoeffconcentration}
d(n) \leq q^{-n}\D(q) \qquad \text{for $q \in (0,1)$.}
\end{equation}

The following Lemma shows that we can choose $q$ depending on $n$ that concentrates the mass of $\D(q)$ in the single term $d(n)q^n$, in the sense that after taking a logarithm, \eqref{Dcoeffconcentration} becomes an asymptotic.  (This $q$ is the unique saddle point on the positive real axis of $\left|q^{-n}\D(q)\right|$ for complex $q$.)

\begin{lemma}\label{L:D}
There exists a unique $c>0$ such that for $q= e^{-\frac{c}{\sqrt{n}}}$, we have $$\log \left(q^{-n}\D(q) \right) \sim \log d(n) = \frac{2\pi}{\sqrt{6}} \sqrt{n} + o(\sqrt{n}).$$
\end{lemma}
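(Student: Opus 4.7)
The plan is to apply the standard saddle-point philosophy: the elementary bound $d(n) \leq q^{-n}\D(q)$ (valid for each $q \in (0,1)$) becomes asymptotically sharp on the logarithmic scale at the unique $q$ minimizing $\Phi_n(t) := nt + \log\D(e^{-t})$, where $q = e^{-t}$. I will identify this minimizer as $t = c/\sqrt{n}$ for a unique $c > 0$, and show that the resulting estimate matches the known asymptotic $\log d(n) \sim (2\pi/\sqrt{6})\sqrt{n}$.

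The central step is to establish
$$\log\D(e^{-t}) = \frac{\pi^2}{6t} + o(1/t) \qquad (t \to 0^+).$$
Using the peak decomposition $\D(q) = \sum_{k\geq 1} q^k (-q;q)_{k-1}^2$, one obtains the upper bound $\D(q) \leq \tfrac{q}{1-q}(-q;q)_\infty^2$ by replacing $(-q;q)_{k-1}$ with $(-q;q)_\infty$ and summing a geometric series. Combined with the classical $\log(-q;q)_\infty \sim \pi^2/(12t)$ (which follows from $(-q;q)_\infty = (q^2;q^2)_\infty/(q;q)_\infty$ and $\log(q;q)_\infty \sim -\pi^2/(6t)$), this gives the correct upper bound. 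For the matching lower bound I would truncate the sum to $k \geq k_0 := \lceil (2/t)\log(1/t)\rceil$, so that $\D(q) \geq (-q;q)_{k_0-1}^2 \cdot \tfrac{q^{k_0}}{1-q}$. The choice of $k_0$ is designed to satisfy two opposing demands: $tk_0 \to \infty$ fast enough that $\log(-q;q)_{k_0-1} = \log(-q;q)_\infty + O(t)$ (using $\sum_{j\geq k_0}\log(1+q^j) \ll e^{-tk_0}/t$), but $tk_0 = O(\log(1/t))$ so that the geometric tail factor $q^{k_0}/(1-q)$ contributes only $O(\log(1/t)) = o(1/t)$ to the logarithm. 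Controlling these error terms so that the leading $\pi^2/(6t)$ is preserved is the main technical obstacle.

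Substituting $t = c/\sqrt{n}$ in the resulting asymptotic yields
$$\log(q^{-n}\D(q)) = \sqrt{n}\left(c + \frac{\pi^2}{6c}\right) + o(\sqrt{n}).$$
The function $h(c) := c + \pi^2/(6c)$ is strictly convex on $(0,\infty)$ with a unique minimum at $c = \pi/\sqrt{6}$ and minimum value $h(\pi/\sqrt{6}) = 2\pi/\sqrt{6}$, pinning down $c$ uniquely and giving $\log(q^{-n}\D(q)) = (2\pi/\sqrt{6})\sqrt{n} + o(\sqrt{n})$. The matching lower bound $\log d(n) \geq (2\pi/\sqrt{6})\sqrt{n} + o(\sqrt{n})$ is classical (it follows from a Meinardus-type Tauberian argument applied to $\D(q)$); combined with the saddle-point upper bound it gives $\log d(n) \sim (2\pi/\sqrt{6})\sqrt{n}$ and hence $\log(q^{-n}\D(q)) \sim \log d(n)$, with uniqueness of $c$ being automatic from the strict convexity of $h$.
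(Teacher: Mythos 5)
Your proposal is correct, and its overall skeleton (the coefficient bound $d(n)\le q^{-n}\D(q)$, the substitution $t=c/\sqrt{n}$, and the minimization of $c+\pi^2/(6c)$ at $c=\pi/\sqrt{6}$ with value $2\pi/\sqrt{6}$) is exactly the paper's. Where you differ is in how the two external inputs are handled. The paper simply cites Theorem 4.3 of \cite{BJSMR}, which gives the sharper statement $\D(e^{-t})\sim\frac14 e^{\pi^2/(6t)}$, and cites \cite{BM} (Table 1) for $\log d(n)\sim\frac{2\pi}{\sqrt6}\sqrt n$; you instead derive the needed logarithmic asymptotic $\log\D(e^{-t})=\frac{\pi^2}{6t}+o(1/t)$ from scratch via the peak decomposition $\D(q)=\sum_{k\ge1}q^k(-q;q)_{k-1}^2$, sandwiching it between $\frac{q}{1-q}(-q;q)_\infty^2$ above and a truncation at $k_0\asymp\frac2t\log\frac1t$ below, with the error terms $O(t)$ and $O(\log(1/t))$ correctly absorbed into $o(1/t)$. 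This makes the generating-function input self-contained modulo the classical Euler-product asymptotic, at the cost of only obtaining the log-level statement, which is all the lemma requires; it also has the nice side effect that the upper half of the $d(n)$ asymptotic comes out of your own bound. One caveat: your parenthetical that the matching lower bound for $\log d(n)$ ``follows from a Meinardus-type Tauberian argument applied to $\D(q)$'' is loose, since Meinardus' theorem in its standard form applies to infinite-product generating functions and $\D(q)$ is not such a product; a genuine lower-bound argument needs either a hands-on construction (e.g.\ injecting pairs of distinct-parts partitions into $\D(n)$) or a citation of the known asymptotic, which is precisely what the paper does via \cite{BM}. Since that asymptotic is indeed classical, this is a citation-level imprecision rather than a gap, and your proof is otherwise complete, including the uniqueness of $c$ from strict convexity of $c\mapsto c+\pi^2/(6c)$, a point the paper leaves implicit.
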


\begin{proof}  Recall that $\log d(n) \sim \frac{2\pi}{\sqrt{6}}\sqrt{n}$ (see \cite{BM}, Table 1).  Theorem 4.3 of \cite{BJSMR} states that $\D(e^{-t}) \sim \frac{1}{4} e^{\frac{\pi^2}{6t}}$ as $t \to 0^+.$  Letting $t=\frac{c}{\sqrt{n}}$, we see that 
$$
\log \left(e^{c\sqrt{n}} \D\left(e^{-\frac{c}{\sqrt{n}}} \right) \right) \sim \left(c+\frac{\pi^2}{6c} \right)\sqrt{n}.
$$  We take $c=\frac{\pi}{\sqrt{6}},$ and the lemma is proved.
\end{proof}

Throughout we let $c:= \frac{\pi}{\sqrt{6}}$ and $q= e^{-\frac{c}{\sqrt{n}}}$.  We will use Lemma \ref{L:D} in the form $\frac{q^{-n}\D(q)}{d(n)} \sim e^{o(\sqrt{n})}.$

Using standard combinatorial techniques, the generating function $\D(q)$ is obtained by summing over peaks as 
$$
\D(q):=\sum_{m \geq 0} d(m)q^m=\sum_{m \geq 0} q^{m+1} \prod_{j=1}^m (1+q^j)^2.
$$
Here, the two products generate the partitions to the left and right of the peak $n+1$, respectively.  Similarly, the proportion of $\lambda \in \D(n)$ with exactly $a$ left parts at most $b$ and peak at least $b+1$ is
\begin{align}
 &\{z^aq^n\} \frac{1}{d(n)}\sum_{m \geq b} q^{m+1} \prod_{j=1}^m (1+q^j)^2 \prod_{j \leq b} \frac{1+zq^j}{1+q^j}. \label{coeffineq}
\end{align}
The product with $z$'s above has the effect of replacing the original factor $1+q^j$ generating a left part $j\leq b$ with $1+zq^j$.  Importantly, it is independent of $m$, so we may factor it out and use Lemma \ref{L:D} to obtain

\begin{equation}\label{summandasym}
\{z^aq^n\} \frac{1}{d(n)}\sum_{m \geq b} q^{m+1} \prod_{j=1}^m (1+q^j)^2 \prod_{j \leq b} \frac{1+zq^j}{1+q^j} \leq \frac{q^{-n}\D(q)}{d(n)} z^{-a}\prod_{j \leq y\sqrt{n}} \frac{1+zq^j}{1+q^j} =: e^{o(\sqrt{n})} \cdot e^{U(\tau)},
\end{equation}
where we have set $b= y\sqrt{n}$ and $z=e^{\tau}$ for $\tau \in \R$ and $y \geq 0$, and
$$
U(\tau):= -\tau a + \sum_{1 \leq j \leq y\sqrt{n}} \left(\log(1+e^{\tau-c\frac{j}{\sqrt{n}}}) - \log(1+e^{-c\frac{j}{\sqrt{n}}}) \right).
$$

Note that $U(0)=0$, thus to get exponential decay in \eqref{summandasym}, we must have $\tau \neq 0$.  Taking derivatives, we have
$$
U'(\tau)=  -a + \sum_{1 \leq j \leq y\sqrt{n}} \frac{e^{\tau-\frac{cj}{\sqrt{n}}}}{1+e^{\tau-\frac{cj}{\sqrt{n}}}};
\qquad
U''(\tau)=\sum_{1 \leq j \leq y\sqrt{n}} \frac{e^{\tau-\frac{cj}{\sqrt{n}}}}{\left(1+e^{\tau-\frac{cj}{\sqrt{n}}}\right)^2}.
$$

Let $\Sigma'_{y}$ and $\Sigma''_{y}$ denote the two sums directly above.  Then multiplying by $\frac{1}{\sqrt{n}}$, we get Riemann sums for the following integrals,
\begin{equation}\label{Sigma'asym}
\frac{1}{\sqrt{n}}\Sigma'_{y} \to \int_0^{y} \frac{e^{\tau-ct}}{1+e^{\tau-ct}}dt =  \frac{1}{c} \left(\log(1+e^{\tau})- \log(1+e^{\tau-c y})\right),
\end{equation}
 and
\begin{equation}\label{Sigma''asym}
\frac{1}{\sqrt{n}}\Sigma''_{y} \to \int_0^{y} \frac{e^{\tau-ct}}{(1+e^{\tau-ct})^2}dt =  \frac{1}{c} \left( \frac{e^{\tau}}{1+e^{\tau}}- \frac{e^{\tau-c y}}{1+e^{\tau-c y}} \right).
\end{equation}

Let $\delta > 0$.  The integrands in \eqref{Sigma'asym} and \eqref{Sigma''asym} are monotonically decreasing functions of $t$; hence from integral comparison, we have, for $|\tau| < \delta$ and $y \in [0, \infty),$
$$
\left|\frac{1}{c}\left(\log(1+e^{\tau}) - \log\left(1+e^{\tau-cy}\right)\right)- \frac{1}{\sqrt{n}}\Sigma'_{y} \right| < \frac{1}{\sqrt{n}} \cdot\frac{e^{\delta}}{1+e^{\delta}},
$$
and
$$\left|\frac{1}{c} \left( \frac{e^{\tau}}{1+e^{\tau}}- \frac{e^{\tau-cy}}{1+e^{\tau-cy}} \right)-\frac{1}{\sqrt{n}}\Sigma''_{y} \right| < \frac{1}{\sqrt{n}} \cdot \frac{e^{\delta}}{(1+e^{\delta})^2}.
$$
Thus the convergence in \eqref{Sigma'asym} and \eqref{Sigma''asym} is uniform in $y \in [0, \infty)$ and $|\tau| < \delta$.

Using Taylor's Theorem, we now have
$$
U(\tau) \leq \tau U'(0) + \frac{\tau^2}{2} \sup_{|\sigma| < \delta} |U''(\sigma)| \sim \tau \sqrt{n} \left(-\frac{a}{\sqrt{n}} + \frac{1}{c}\left(\log(2)-\log(1+e^{-cy})\right) + O(\tau) \right)
$$
$$
= \tau\sqrt{n} \left(-\frac{a}{\sqrt{n}} + \overline{g_d}(y) + O(\tau) \right),
$$
where, because of uniformity in \eqref{Sigma''asym}, $O(\tau)$ does not depend on $y$.  Thus, choosing $\tau$ small in absolute value and positive or negative as needed, we get that $U(\tau) \leq -C\sqrt{n}$ for some $C>0$ that holds for all $\left(\frac{a}{\sqrt{n}},y\right)$ with $\left|-\frac{a}{\sqrt{n}} + \overline{g_d}(y) \right| \geq \epsilon.$  This proves \eqref{Vllim}. \\ \

In a similar way, we can prove
\begin{equation}\label{Vrlim}
\lim_{n \to \infty} \frac{1}{d(n)} \cdot \# \left\{ \lambda \in \D(n): \overline{\overline{V_{r}}}(\lambda) \not\subset N_{\epsilon}(\overline{\overline{f_d}}) \right\} = 0,
\end{equation}
where the right-most vertex of a sequence is fixed on the $y$-axis and $\overline{\overline{f_d}}$ is a right-translate of $f_d$ by $\frac{\sqrt{6}}{\pi}\log(2)$.

Because the left and right limit shapes in \eqref{Vllim} and \eqref{Vrlim} are {\it negative} results about 0\% of sequences, and because to obtain these results we have fixed left-most and right-most vertices on the $y$-axis, more work is needed to be able to ``glue'' these shapes together at the $y$-axis and finish the proof of Theorem \ref{T:D}.  

Now fixing the left peak vertex on the $y$-axis and noting that the peak occurs once, it is obvious that
$$
\lim_{n \to \infty} \frac{1}{d(n)} \cdot \# \left\{ \lambda \in \D(n): V_{p}(\lambda) \not\subset N_{\epsilon}(f_d) \right\} = 0,
$$
since $N_{\epsilon}$ contains an $\epsilon$-neighborhood of the $y$-axis.  Finally, since the total area under a renormalized diagram is 1, the above with \eqref{Vllim} and \eqref{Vrlim} implies Theorem \ref{T:D}.

\section{Proof of Theorem \ref{T:S}}\label{S:S}

For the proof of Theorem \ref{T:S}, we need to estimate a slightly different product, and again we will want to do our manipulations in the first quadrant.  Once left and right limit shapes are obtained, we show they can be ``glued together'' at the $y$-axis which follows from a well-known asymptotic for partitions with restricted largest part.

Throughout the proof let $c:= \frac{\pi}{\sqrt{3}}$ and $q=e^{-\frac{c}{\sqrt{n}}}$.  This $c$ is the constant needed in the following lemma, an analogue of Lemma \ref{L:D}.  Let $\ST(q):= \sum_{m \geq 0} s(m)q^m$, and recall from \cite{W2} that
$$
\ST(q) = \prod_{m\geq 1} \frac{1}{(1-q^m)^2} \cdot L(q), \qquad \text{where} \quad L(q):= \sum_{m \geq 1}(-1)^{m+1}q^{\frac{m(m+1)}{2}}.
$$

\begin{lemma}\label{L:S}
There exists a unique $c>0$ such that for $q= e^{-\frac{c}{\sqrt{n}}}$, we have $$\log \left(q^{-n}\ST(q) \right) \sim \log s(n) = \frac{2\pi}{\sqrt{3}} \sqrt{n} + o(\sqrt{n}).$$
\end{lemma}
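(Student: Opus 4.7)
The plan is to mirror the proof of Lemma \ref{L:D} almost verbatim, the only novelty being a quantitative asymptotic for $\ST(e^{-t})$ as $t \to 0^+$. I would start by recording the known asymptotic $\log s(n) \sim \frac{2\pi}{\sqrt{3}}\sqrt{n}$ (see \cite{BM}, Table 1). The rest of the argument then reduces to establishing
$$
\log \ST(e^{-t}) \sim \frac{\pi^2}{3t} \qquad \text{as } t \to 0^+.
$$

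Using the factorization $\ST(q) = \prod_{m\geq 1}(1-q^m)^{-2}\cdot L(q)$, the first factor is handled by the classical modular asymptotic
$$
\prod_{m \geq 1}\frac{1}{1-e^{-mt}} \sim \sqrt{\frac{t}{2\pi}}\exp\!\left(\frac{\pi^2}{6t}\right),
$$
whose square contributes the full main term $\pi^2/(3t)$. The main technical step is to show that the false-theta factor $L(e^{-t})$ stays bounded (indeed tends to $\tfrac{1}{2}$) as $t \to 0^+$. I would pair consecutive terms to write
$$
L(e^{-t}) = \sum_{k \geq 0} e^{-t(2k+1)(k+1)}\bigl(1-e^{-2t(k+1)}\bigr),
$$
and, via the substitution $x = (k+1)\sqrt{t}$, recognize this as a Riemann sum (with step $\sqrt{t}$) for $\int_0^\infty 2x\, e^{-2x^2}\,dx = \tfrac{1}{2}$. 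Hence $\log L(e^{-t}) = O(1)$, which is negligible at the $\sqrt{n}$ scale.

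Combining the two estimates and setting $t = c/\sqrt{n}$ yields the same structure as in Lemma \ref{L:D}:
$$
\log\!\left(e^{c\sqrt{n}}\,\ST(e^{-c/\sqrt{n}})\right) \sim \left(c + \frac{\pi^2}{3c}\right)\sqrt{n}.
$$
The function $c \mapsto c + \pi^2/(3c)$ is strictly convex on $(0,\infty)$ and attains its unique minimum value $2\pi/\sqrt{3}$ at $c = \pi/\sqrt{3}$, which matches $\log s(n)$. Strict convexity also gives uniqueness of $c$, so the only genuine obstacle is the boundedness of $L(e^{-t})$; everything else is bookkeeping identical to the strongly unimodal case.
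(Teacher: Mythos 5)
Your proposal is correct and follows essentially the same route as the paper: factor $\ST(q)$ into the squared partition product times $L(q)$, use the $\eta$-transformation asymptotic $\log\prod_{m\geq 1}(1-e^{-mt})^{-1}\sim \frac{\pi^2}{6t}$, note that $L(e^{-t})$ contributes only $O(1)$, set $t=c/\sqrt{n}$, and pick $c=\pi/\sqrt{3}$ as the unique minimizer of $c+\frac{\pi^2}{3c}$ to match $\log s(n)\sim \frac{2\pi}{\sqrt{3}}\sqrt{n}$. The only (harmless) difference is that you prove $L(e^{-t})\to\frac12$ directly by pairing terms and a Riemann-sum argument, whereas the paper simply cites Wright's Lemma 2 of \cite{W2} for the same fact.
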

\begin{proof}
From the well-known transformation of the Dedekind $\eta$-function (\cite{A76}, Th. 3.1), one has 
$$
\log \prod_{m \geq 1} \frac{1}{1-e^{-mt}} \sim \frac{\pi^2}{6t}, \qquad \text{as $t \to 0^+$.}
$$  
By Lemma 2 of \cite{W2} we have $\log L(e^{-t}) \sim \log \frac{1}{2}.$  Thus, letting $t= \frac{c}{\sqrt{n}},$ we have $$\log\left( e^{-c\sqrt{n}} \ST\left(e^{-\frac{c}{\sqrt{n}}}\right)\right) \sim \left(c+ \frac{\pi^2}{3c}\right)\sqrt{n}.$$  We take $c=\frac{\pi}{\sqrt{3}}$, minimizing the term on the right, and the lemma is proved.
\end{proof}

Let $\overline{f_s}(t)$ be the left-half of $f_s$ translated right into the first quadrant as follows 
$$
\overline{f_s}: \left[0, - \frac{1}{c} \log \left(1-e^{-c\epsilon}\right) \right) \mapsto [\epsilon, \infty), \qquad \overline{f_s}(x)= -\frac{1}{c} \log\left(1-e^{cx}(1-e^{-c\epsilon}) \right).
$$

We will also make use of the inverse for $\overline{f_s}$ which is 
$$
\overline{g_s}: [\epsilon, \infty) \mapsto \left[0, - \frac{1}{c} \log \left(1-e^{-c\epsilon}\right) \right), \qquad \overline{g_s}(y):= \frac{1}{c} \log \left(\frac{1-e^{-cy}}{1-e^{-c\epsilon}} \right).
$$

By Lemma \ref{L:S}, an upper bound for the proportion of the number of stacks of size $n$ with $a$ left parts that lie in $[\epsilon \sqrt{n}, y\sqrt{n}]$ and peak at least $y\sqrt{n}$+1, is
\begin{equation}\label{SVasym} 
\frac{q^{-n} \ST(q)}{s(n)} z^{-a} \prod_{ \epsilon \sqrt{n} \leq j \leq y \sqrt{n}} \frac{1-q^j}{1-zq^j} =: e^{o(\sqrt{n})} \cdot e^{U(\tau)},
\end{equation}
where $z=e^{\tau}$ for $\tau \in \R$ and
$$
U(\tau):= -\tau a + \sum_{\epsilon \sqrt{n} \leq j \leq y \sqrt{n}} \left(\log(1-e^{-c\frac{j}{\sqrt{n}}}) - \log(1-e^{\tau-c\frac{j}{\sqrt{n}}}) \right).
$$
We find the derivatives
$$
U'(\tau)=  -a + \sum_{\epsilon \sqrt{n} \leq j \leq y \sqrt{n}} \frac{e^{\tau-\frac{cj}{\sqrt{n}}}}{1-e^{\tau-\frac{cj}{\sqrt{n}}}}; \qquad U''(\tau)=\sum_{\epsilon \sqrt{n} \leq j \leq y \sqrt{n}} \frac{e^{\tau-\frac{cj}{\sqrt{n}}}}{\left(1-e^{\tau-\frac{cj}{\sqrt{n}}}\right)^2}.
$$
Let $\Sigma'_{y}$ and $\Sigma''_{y}$ denote the two sums directly above.  Then multiplying by $\frac{1}{\sqrt{n}}$, we get Riemann sums for the following integrals:
\begin{equation}\label{bSigma'asym} \frac{1}{\sqrt{n}}\Sigma'_{y} \to \int_{\epsilon}^{y} \frac{e^{\tau-ct}}{1-e^{\tau-ct}}dt =  \frac{1}{c} \left(-\log(1-e^{\tau-c\epsilon})+ \log(1-e^{\tau-cy})\right)= \frac{1}{c} \log \left(\frac{1-e^{\tau-cy}}{1-e^{\tau-c\epsilon}} \right),\end{equation}
 and
\begin{equation}\label{bSigma''asym}\frac{1}{\sqrt{n}}\Sigma''_{y} \to \int_{\epsilon}^{y} \frac{e^{\tau-ct}}{(1-e^{\tau-ct})^2}dt =  \frac{1}{c} \left( \frac{e^{\tau-c\epsilon}}{1-e^{\tau-c\epsilon}}- \frac{e^{\tau-cy}}{1-e^{\tau-cy}} \right). \end{equation}
Let $\delta > 0$.  From integral comparison and monotonicity of the integrand, it is easy to see that, for $|\tau| < \delta$ and $y \in [\epsilon, \infty),$
$$\left|\frac{1}{c}\left(-\log(1-e^{\tau-c\epsilon}) + \log\left(1-e^{\tau-cy}\right)\right)- \frac{1}{\sqrt{n}}\Sigma'_{y} \right| < \frac{1}{\sqrt{n}} \cdot\frac{e^{\delta-c\epsilon}}{1-e^{\delta-c\epsilon}},$$ and
$$\left|\frac{1}{c} \left( \frac{e^{\tau}}{1-e^{\tau-c\epsilon}}- \frac{e^{\tau-cy}}{1-e^{\tau-cs}} \right)-\frac{1}{\sqrt{n}}\Sigma''_{y} \right| < \frac{1}{\sqrt{n}} \cdot \frac{e^{\delta-c\epsilon}}{(1-e^{\delta-c\epsilon})^2}.$$  Thus the convergence in \eqref{bSigma'asym} and \eqref{bSigma''asym} is uniform in $y \in [\epsilon, \infty)$ and $|\tau| < \delta$.  Using Taylor's Theorem, we now have
\begin{align*} U(\tau) \leq \tau U'(0) + \frac{\tau^2}{2} \sup_{|\sigma| < \delta} |U''(\sigma)| &\sim \tau \sqrt{n} \left(-\frac{a}{\sqrt{n}} +  \frac{1}{c} \log \left(\frac{1-e^{-cy}}{1-e^{-c\epsilon}} \right) + O(\tau) \right) \\ &\sim \tau \sqrt{n} \left(-\frac{a}{\sqrt{n}} +  \overline{g_s}(y)  + O(\tau) \right) \end{align*} where, because of uniformity in \eqref{bSigma''asym}, $O(\tau)$ does not depend on $y$.  Thus, when $\left|-\frac{a}{\sqrt{n}}-\overline{g_s}(y)\right| \geq \epsilon$, we conclude, as in the proof of Theorem \ref{T:D}, that $$\lim_{n \to \infty} \frac{1}{s(n)} \cdot \#\{\lambda \in \ST(n): \overline{V_{\ell}}(\lambda) \not\subset N_{\epsilon}(\overline{f_s}) \} =0,$$ where the left vertices $\overline{V_{\ell}}(\lambda)$ have been translated so that the first left part that is at least $\epsilon \sqrt{n}$ has been placed directly to the left of the $y$-axis.

By symmetry, we can also say that $$\lim_{n \to \infty} \frac{1}{s(n)} \cdot \#\{\lambda \in \ST(n): \overline{\overline{V_{r}}}(\lambda) \not\subset N_{\epsilon}(\overline{\overline{f_s}}) \} =0,$$ for similar right-translates of $f_s$ and right-vertices.  However, we cannot, as in Section \ref{S:D}, use an area argument to immediately conclude Theorem \ref{T:S}.

For strongly unimodal sequences, the fact that parts on either side are distinct and the total area under a diagram is 1 forced a limit shape from the {\it negative} result that on average $0\%$ of left (resp. right) halves of diagrams are {\it not} near a left (resp. right) limit shape.  But we do not have this forcing here because of the possibility of repetition of peaks so that the peak vertices no longer lie in an $\epsilon$-neighborhood of the $y$-axis.  Nevertheless, we will show that peaks of almost all diagrams are $\omega(\sqrt{n})$, which implies they occur with multiplicity $o(\sqrt{n})$ and so 
$$
\lim_{n \to \infty} \frac{1}{s(n)}\#\left\{\lambda \in \ST(n) : V_p(\lambda) \not\subset N_{\epsilon}(f_s)\right\} = 0.
$$
This will then conclude the proof.

\begin{lemma}\label{L:Speaks}
Let $t > 0$ be an arbitrary fixed constant.  If $k=t\sqrt{n}$ and $s_{\leq k}(n)$ denotes the number of unimodal sequences of size $n$ in which peaks are size at most $k$, then $$\lim_{n \to \infty} \frac{s_{\leq k}(n)}{s(n)} = 0.$$
\end{lemma}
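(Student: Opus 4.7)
\emph{Plan.} The basic strategy is the same saddle-point-style upper bound used in Sections \ref{S:D} and \ref{S:S}: at the saddle $q = e^{-c/\sqrt{n}}$ with $c = \pi/\sqrt{3}$, Lemma \ref{L:S} gives $q^{-n}\ST(q) = s(n) e^{o(\sqrt{n})}$, so the estimate $s_{\leq k}(n) \leq q^{-n}\ST_{\leq k}(q)$ combined with any bound of the form $\ST_{\leq k}(q)/\ST(q) \leq e^{-C(t)\sqrt{n}+o(\sqrt{n})}$ with $C(t)>0$ will force $s_{\leq k}(n)/s(n) \to 0$. So the entire task reduces to estimating this ratio at the saddle.

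First, I would decompose $\ST_{\leq k}(q)$ by summing over the value $m$ of the peak plateau, obtaining
$$
\sum_{n\geq 0}s_{\leq k}(n)q^n = \sum_{m=1}^{k} \frac{q^m}{1-q^m} \prod_{i=1}^{m-1} \frac{1}{(1-q^i)^2},
$$
where $q^m/(1-q^m)$ generates a block of at least one part equal to $m$ (the peaks), and the two identical products generate the parts strictly less than $m$ on the left and right. Each summand is bounded above by $\frac{q}{1-q}\prod_{i=1}^{k-1}(1-q^i)^{-2}$, since $q^m/(1-q^m)$ is decreasing in $m \geq 1$ on $(0,1)$ and adjoining the factors $(1-q^i)^{-2}$ for $m \leq i \leq k-1$ only enlarges the product. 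Hence $\ST_{\leq k}(q) \leq k \cdot \frac{q}{1-q} \prod_{i=1}^{k-1}(1-q^i)^{-2}$.

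Next I compare with $\ST(q) = L(q)\prod_{i \geq 1}(1-q^i)^{-2}$. Taking logarithms and subtracting, everything cancels except the tail $2\sum_{i \geq k}(-\log(1-q^i))$ and an $O(\log n)$ remainder coming from $\log L(q)$, $\log k$, and $\log(q/(1-q))$ (each bounded by a multiple of $\log n$ since $q \to 1$ at rate $1/\sqrt{n}$ and $L(q) \to 1/2$). The crux of the lemma is that this tail is $\Omega(\sqrt{n})$: since $-\log(1-x)$ is increasing, for $k \leq i \leq 2k-1$ we have $-\log(1-q^i) \geq -\log(1-q^{2k}) = -\log(1-e^{-2ct}) > 0$, so
$$
\sum_{i \geq k}(-\log(1-q^i)) \geq k\cdot\bigl(-\log(1-e^{-2ct})\bigr) = t\sqrt{n}\cdot\bigl(-\log(1-e^{-2ct})\bigr).
$$
Therefore $\log\bigl(\ST(q)/\ST_{\leq k}(q)\bigr) \geq C(t)\sqrt{n} - O(\log n)$ with $C(t) := 2t(-\log(1-e^{-2ct})) > 0$, and combining with Lemma \ref{L:S} yields $s_{\leq k}(n)/s(n) \leq e^{-C(t)\sqrt{n}/2 + o(\sqrt{n})} \to 0$.

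The main obstacle is purely bookkeeping---verifying that the polynomial-in-$n$ prefactors in the upper bound on $\ST_{\leq k}(q)$ (the $k$ and the $q/(1-q) \sim \sqrt{n}/c$) are harmless against the exponential savings $e^{-C(t)\sqrt{n}}$, which they are since $\log n = o(\sqrt{n})$.
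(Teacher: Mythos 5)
Your proof is correct, but it takes a genuinely different route from the paper. You work entirely with generating functions at the saddle point: decomposing $\sum_n s_{\leq k}(n)q^n$ by the peak value $m$ (the block of maximal parts contributes $q^m/(1-q^m)$, the two flanks contribute $\prod_{i<m}(1-q^i)^{-2}$), bounding it crudely by $k\cdot\frac{q}{1-q}\prod_{i<k}(1-q^i)^{-2}$, and then observing that the missing tail $\prod_{i\geq k}(1-q^i)^{-2}$ of Wright's product for $\ST(q)$ is of size $e^{C(t)\sqrt{n}}$ with $C(t)=2t\bigl(-\log(1-e^{-2ct})\bigr)>0$ at $q=e^{-c/\sqrt{n}}$, $k=t\sqrt{n}$; combined with Lemma \ref{L:S} in the form $q^{-n}\ST(q)/s(n)=e^{o(\sqrt{n})}$, this gives $s_{\leq k}(n)/s(n)\leq e^{-C(t)\sqrt{n}+o(\sqrt{n})}\to 0$. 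The paper instead injects stacks with peak size $k$ into pairs of partitions with parts at most $k$, splits the resulting convolution at $\epsilon n$ and $(1-\epsilon)n$, and invokes the Szekeres--Canfield--Romik asymptotic $p_{\leq t\sqrt{n}}(n)\ll e^{H(t)\sqrt{n}}$ together with a monotonicity analysis of $H$ and a choice of $\epsilon=\epsilon(t)$. Your argument is more elementary and self-contained (it needs only Wright's product formula, already quoted in Section \ref{S:S}, and $L(q)\to\tfrac12$, with no external bounded-part asymptotics and no $\epsilon$-splitting), and it yields an explicit exponential decay rate; the paper's route, at the cost of the heavier input, exposes the combinatorial structure (the two bounded-part halves and the function $H$) that it reuses in the semi-strict case of Lemma \ref{L:DMpeaks}. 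Only cosmetic points remain in your write-up: $k=t\sqrt{n}$ should be read as $\lfloor t\sqrt{n}\rfloor$, and the constant term of your series identity differs harmlessly from $\ST(q)$, neither of which affects the estimate.
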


\begin{proof}[Proof of Lemma \ref{L:Speaks}] Let $\ST_k(n)$ denote the set of stacks of size $n$ in which peaks have size $k$.  Let $\mathscr{P}_{\leq k}(n)$ denote the set of partitions of $n$ into parts $\leq k$.  Let $s_k(n)$ and $p_{\leq k}(n)$ be the cardinality of these sets, respectively.  Then we have an injection 
\begin{equation}\label{stinj} 
\ST_k(n) \hookrightarrow \bigcup_{m=0}^{n} \mathscr{P}_{\leq k}(m) \times \mathscr{P}_{\leq k}(n-m),  
\end{equation} 
given by cutting a stack $\lambda \in \ST_k(n)$ in half directly right of the left-most peak.\footnote{We can be more precise about the image in \eqref{stinj}, but we will not need to be.}  Thus, we may write  
\begin{equation}\label{sksum} 
\begin{split}  
s_k(n) &\leq \sum_{m=0}^n p_{\leq k}(m)  p_{\leq k} (n-m) \\ &\leq 2\sum_{0 \leq m \leq \epsilon \cdot n} p_{\leq k}(m)p_{\leq k}(n-m) + \sum_{\epsilon \cdot n \leq m \leq (1-\epsilon)n} p_{\leq k}(m)  p_{\leq k} (n-m) \\ & =: 2\Sigma_1 + \Sigma_2 
\end{split} 
\end{equation} 
for some $\epsilon=\epsilon(t) \in \left(0,\frac{1}{2}\right)$ to be specified later.
Asymptotics for $p_{\leq k}(n)$ when $k= t\sqrt{n}$ were given first by Szekeres \cite{S53},  reformulated and reproved by Canfield \cite{C} and later by Romik \cite{R}.  From  Romik's formulation, 
$$
p_{\leq k}(n) \ll e^{H(t)\sqrt{n}},
$$ where
\begin{align*}
H(t) &= 2\alpha(t) - t \log \left(1-e^{-t\alpha(t)}\right) \\
\alpha:&[0, \infty) \to \left[0, \frac{\pi}{\sqrt{6}} \right) \ \text{defined by} \ \alpha(t)^2 = \text{Li}_2\left(1-e^{-t\alpha(t)}\right)
\end{align*}

We now show that $\alpha:[0, \infty) \to \left[0, \frac{\pi}{\sqrt{6}} \right)$ is strictly increasing; in particular, $\alpha(t)$ is well-defined as above.  One finds
$$
\alpha'(t)= \frac{t\alpha(t) }{2(e^{t\alpha(t)}-1)-t^2}.
$$
The numerator is positive for $t > 0$, so it remains to show that the denominator is positive for $t > 0$.  We will actually show
\begin{equation}\label{E:alpharatiopositive}
\frac{t^2}{e^{t\alpha(t)}-1} < 1, \qquad \text{for $ t > 0$.}
\end{equation}

Following Canfield (\cite{C}, Comment 19), we have
\begin{align*}
\alpha(t)^2&= \text{Li}_2\left(1-e^{-t\alpha(t)}\right) \\
&= -\int_0^{1-e^{-t\alpha(t)}} \frac{\log(1-z)}{z}dz \\
&= \int_0^{t\alpha(t)} \frac{u}{e^u-1}du & \text{(substituting $z=1-e^{-u}$)}\\
&> t\alpha(t) \cdot \frac{t\alpha(t)}{e^{t\alpha(t)}-1}. & \text{(since the integrand is increasing)}
\end{align*}
From this, \eqref{E:alpharatiopositive} follows, so $\alpha$ is strictly increasing.  Next, it may be checked that $H'(t)=-\log\left(1-e^{-t\alpha(t)}\right) > 0$, so that $H$ is strictly increasing.  Furthermore, we have 
$$
\lim_{t \to \infty} H(t) = \lim_{t \to \infty} 2 \alpha(t) = \pi \sqrt{\frac{2}{3}}.
$$

Returning to \eqref{sksum}, we may write any $m \in [\epsilon n , (1-\epsilon)n]$ as $sn$ for some $s \in [\epsilon, 1-\epsilon]$.  Thus, $$\Sigma_2 \ll n \exp\left(\sqrt{n} \sup_{s \in [\epsilon, 1-\epsilon]}\left(\sqrt{s}H\left(\frac{t}{\sqrt{s}}\right)+\sqrt{1-s}H\left(\frac{t}{\sqrt{1-s}}\right) \right) \right).$$  Since $H$ is strictly increasing to $\pi\sqrt{\frac{2}{3}}$, for any fixed $t$, $\epsilon$ and all $s\in [\epsilon, 1-\epsilon]$, there is a $B=B_{\epsilon, t} < \pi \sqrt{\frac{2}{3}}$ such that $H\left(\frac{t}{\sqrt{s}}\right), H\left(\frac{t}{\sqrt{1-s}}\right) \leq B$.  Hence, we may bound the above by $$n \exp\left(\sqrt{n} \sup_{s \in [\epsilon, 1-\epsilon]} \left(\sqrt{s}\cdot B + \sqrt{1-s} \cdot B \right)\right) = n \exp\left(\sqrt{n} \cdot \sqrt{2}\cdot B \right).$$

Now, $$2\Sigma_1 \ll n p_{\leq k}(\floor{\epsilon n}) p_{\leq k}(n) \ll n \exp\left(\sqrt{n}\left(\sqrt{\epsilon}H\left(\frac{t}{\sqrt{\epsilon}}\right) + \pi \sqrt{\frac{2}{3}}\right) \right).$$  Since $H$ is bounded, we may choose $\epsilon=\epsilon(t)$ so that $$C:= \sqrt{\epsilon}H\left(\frac{t}{\sqrt{\epsilon}}\right) + \pi \sqrt{\frac{2}{3}} < \pi \frac{2}{\sqrt{3}}.$$

Thus, altogether we have $$s_{\leq k}(n) \leq ns_{k}(n) \ll n^2 \exp\left(\sqrt{n} \cdot C \right) + n^2 \exp\left(\sqrt{n} \cdot B\sqrt{2} \right),$$ where $C, B\sqrt{2} < \pi \frac{2}{\sqrt{3}}.$  Recalling that $s(n) \sim \frac{1}{2^3 \cdot 3^{3/4} \cdot n^{5/4}} e^{\pi \frac{2}{\sqrt{3}} \sqrt{n}}$ (\cite{W2}, Th. 2), we have finished the proof of Lemma \ref{L:Speaks}. \end{proof}

By our earlier observations, the the proof of Theorem \ref{T:S} is now complete.

\section{Proof of Theorem \ref{T:DM}}\label{S:DM}

Here, the derivations of left and right limit shapes are similar, respectively, to those in Sections \ref{S:D} and \ref{S:S}.  Thus, in this section we content ourselves to proving an analogue of Lemmas \ref{L:D} and \ref{L:S} and to showing that peaks are $\omega(\sqrt{n})$ on average; as in Section \ref{S:S}, this is necessary to avoid the possibility of a degenerate limit shape.

Let $\D_m(q):= \sum_{m \geq 0} dm(m)q^m.$  As required by our technique, the next lemma shows that there is a choice of $q$ so that $$\frac{q^{-n}\D_m(q)}{dm(n)}=e^{o(\sqrt{n})}.$$
\begin{lemma}\label{L:DM}
There exists $c>0$ such that for $q= e^{-\frac{c}{\sqrt{n}}}$, we have $$\log \left(q^{-n}\D_m(q) \right) \sim \log dm(n) = \pi \sqrt{n} + o(\sqrt{n}).$$
\end{lemma}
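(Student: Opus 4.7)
The plan is to parallel the proofs of Lemmas \ref{L:D} and \ref{L:S}: first produce an asymptotic for $\D_m(q)$ as $q \to 1^{-}$ using a modular transformation, then optimize the choice of $c$ to match the known asymptotic for $\log dm(n)$.

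First I would record the product formula for $\D_m(q)$ coming from the generating function identity already used in the paper,
\[
\D_m(q) \;=\; \frac{q}{1+q}\prod_{j \geq 1}\frac{1+q^{j}}{1-q^{j}},
\]
which follows from the equality $(1+q)q^{-1}\sum_{n\geq 1} dm(n)q^{n}=\prod_{j\geq 1}(1+q^{j})/(1-q^{j})$ cited just above Corollary \ref{C:Pbar}. Writing $\prod_{j\geq 1}(1+q^{j}) = \prod_{j\geq 1}(1-q^{2j})/(1-q^{j})$, the analysis reduces to the classical asymptotics of $\prod_{j\geq 1}(1-q^{j})^{-1}$.

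Setting $q=e^{-t}$ and invoking the Dedekind $\eta$ transformation (as cited in the proof of Lemma \ref{L:S}, following \cite{A76} Th.~3.1), I would obtain
\[
\log\prod_{j \geq 1}\frac{1}{1-e^{-jt}} \;\sim\; \frac{\pi^{2}}{6t}, \qquad
\log\prod_{j \geq 1}\frac{1}{1-e^{-2jt}} \;\sim\; \frac{\pi^{2}}{12t} \qquad (t\to 0^{+}),
\]
so that
\[
\log\prod_{j \geq 1}(1+e^{-jt}) \;\sim\; \frac{\pi^{2}}{6t}-\frac{\pi^{2}}{12t}=\frac{\pi^{2}}{12t},
\]
and therefore
\[
\log\D_m(e^{-t}) \;\sim\; \frac{\pi^{2}}{6t}+\frac{\pi^{2}}{12t}=\frac{\pi^{2}}{4t},
\]
since the prefactor $q/(1+q)$ contributes only $O(1)$.

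Now letting $t=c/\sqrt{n}$, this gives
\[
\log\!\left(e^{c\sqrt n}\,\D_m\!\left(e^{-c/\sqrt n}\right)\right) \;\sim\; \left(c+\frac{\pi^{2}}{4c}\right)\sqrt n.
\]
Minimizing $c+\pi^{2}/(4c)$ over $c>0$ yields the unique optimum $c=\pi/2$, at which the bracket equals $\pi$. Combined with the known asymptotic $\log dm(n)\sim \pi\sqrt{n}$ (e.g.\ from the tables of \cite{BM}, or derivable via a standard circle-method / Meinardus-type argument from the same generating function), this gives the stated equivalence.

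The main obstacle is locating and justifying a clean reference for $\log dm(n)\sim\pi\sqrt n$, since without it one only has the upper bound $dm(n) \leq q^{-n}\D_m(q)$ valid for each $q\in(0,1)$, and not the matching lower bound needed to turn the estimate above into an asymptotic equivalence of logarithms. If a ready-made citation is unavailable, I would instead derive it by a saddle-point argument applied directly to the Cauchy integral $dm(n)=\frac{1}{2\pi i}\oint q^{-n-1}\D_m(q)\,dq$, using the same saddle $c=\pi/2$ computed above; the asymptotic $\D_m(e^{-t}) \sim \frac{1}{2}e^{\pi^{2}/(4t)}$ together with routine estimates of the minor arcs then yields $dm(n)\sim K n^{-3/4} e^{\pi\sqrt{n}}$ for an explicit constant $K$, which is more than enough to conclude.
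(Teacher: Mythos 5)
Your proposal is correct and takes essentially the same route as the paper: estimate $\log \D_m(e^{-t}) \sim \frac{\pi^2}{4t}$ as $t \to 0^+$, set $t = c/\sqrt{n}$, and minimize $c + \frac{\pi^2}{4c}$ at $c = \frac{\pi}{2}$, matching $\log dm(n) \sim \pi\sqrt{n}$; the paper simply cites equation (3.3) and Theorem 1.3 of \cite{BM} for these two inputs rather than rederiving the product asymptotic from the $\eta$-transformation as you do. Your worry about a reference for $\log dm(n) \sim \pi\sqrt{n}$ is unfounded, since \cite{BM} Theorem 1.3 gives $dm(n) \sim \frac{1}{16n}e^{\pi\sqrt{n}}$ (used later in the paper), and your fallback remark's sharp constants ($\D_m(e^{-t}) \sim \frac{1}{2}e^{\pi^2/(4t)}$, the factor $n^{-3/4}$) are slightly off---the correct forms carry an extra $\sqrt{t}$, giving $n^{-1}$---but this is immaterial at the level of logarithms needed for the lemma.
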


\begin{proof}  By equation (3.3) and Theorem 1.3 of \cite{BM}, we have $\log dm(n) \sim \pi \sqrt{n}$, and $$\log \left(e^{c\sqrt{n}} \D_m\left(e^{-\frac{c}{\sqrt{n}}} \right) \right) \sim \left(c+\frac{\pi^2}{4c} \right)\sqrt{n}.$$
We take $c=\frac{\pi}{2},$ and the lemma is proved.
\end{proof}

With Lemma \ref{L:DM} in hand, we may derive left and right limit shapes as in Sections \ref{S:D} and \ref{S:S}, respectively.  Thus, the proof will be completed by the following lemma, which shows that peaks are $\omega(\sqrt{n})$ on average.

\begin{lemma}\label{L:DMpeaks}
Let $t > 2$ be an arbitrary fixed constant.  If $k=t\sqrt{n}$ and $dm_{\leq k}(n)$ denotes the number of stacks of size $n$ in which the peak is at most $k$, then 
$$
\lim_{n \to \infty} \frac{dm_{\leq k}(n)}{dm(n)} = 0.
$$
\end{lemma}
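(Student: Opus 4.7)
The plan is to adapt the convolution argument of Lemma \ref{L:Speaks} to the semi-strict setting, where the two halves of a sequence are of different types. Given $\lambda \in \D_m(n)$ with peak $j \leq k$, we may write $\lambda = (\alpha, j, \beta)$ where $\alpha$ is a distinct-parts partition with all parts $< j$ and $\beta$ is an unrestricted partition with all parts $< j$. Absorbing the peak $j$ into $\beta$ yields the injection
$$
\{\lambda \in \D_m(n) : \text{peak}(\lambda) \leq k\} \hookrightarrow \bigcup_{m=0}^n \mathscr{Q}_{\leq k}(m) \times \mathscr{P}_{\leq k}(n-m),
$$
where $\mathscr{Q}_{\leq k}(m)$ denotes the set of distinct-parts partitions of $m$ with largest part $\leq k$, of cardinality $q_{\leq k}(m)$. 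Hence
$$
dm_{\leq k}(n) \;\leq\; \sum_{m=0}^n q_{\leq k}(m)\, p_{\leq k}(n-m).
$$

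The next ingredient is a Romik-type uniform bound for $q_{\leq k}$: there exists a continuous, strictly increasing function $H_d:[0, \infty) \to [0, \pi/\sqrt{3})$ with $\lim_{u \to \infty} H_d(u) = \pi/\sqrt{3}$ such that $q_{\leq k}(m) \ll \exp(H_d(k/\sqrt{m})\sqrt{m})$ uniformly. This can be established either by repeating the Szekeres--Canfield--Romik analysis with the Euler product $\prod_{i=1}^k(1+q^i)$ in place of $\prod_{i=1}^k(1-q^i)^{-1}$, or by a direct saddle-point estimate on the generating function. Combining with Romik's bound for $p_{\leq k}$ used in Lemma \ref{L:Speaks}, we obtain
$$
q_{\leq k}(m)\, p_{\leq k}(n-m) \;\ll\; \exp\!\left(\sqrt{n}\cdot F(m/n,\, t)\right), \qquad F(s, t) := \sqrt{s}\, H_d\!\left(\tfrac{t}{\sqrt{s}}\right) + \sqrt{1-s}\, H\!\left(\tfrac{t}{\sqrt{1-s}}\right),
$$
with $t = k/\sqrt{n}$.

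Mirroring Lemma \ref{L:Speaks}, I would then split $\sum_{m=0}^n$ into three ranges: $[0, \epsilon n]$, $[\epsilon n, (1-\epsilon) n]$, and $[(1-\epsilon) n, n]$ for a small $\epsilon > 0$. On the two boundary pieces, $F$ is bounded by $\pi\sqrt{2/3} + O(\sqrt{\epsilon})$ or $\pi/\sqrt{3} + O(\sqrt{\epsilon})$ respectively, both strictly less than $\pi$ once $\epsilon$ is small enough. The main obstacle is the central range, where one must establish the uniform gap $\sup_{s \in [\epsilon, 1-\epsilon]} F(s, t) < \pi - \delta$ for some $\delta = \delta(t, \epsilon) > 0$. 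The key observation is that
$$
F(s, \infty) = \pi\!\left(\sqrt{s/3} + \sqrt{2(1-s)/3}\right)
$$
attains its maximum value $\pi$ exactly at $s = 1/3$, while for any finite $t$ we have $F(s, t) < F(s, \infty) \leq \pi$ strictly and pointwise; continuity and compactness then provide the uniform gap. The hypothesis $t > 2$ is convenient for making these quantitative estimates transparent; as the left-hand side is monotone in $k$, it also suffices for the full limit-shape argument in Section \ref{S:DM}.

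Putting the three ranges together, the total convolution is $\ll \exp((\pi - C)\sqrt{n})$ for some $C = C(t) > 0$ (polynomial factors being absorbed). Since Lemma \ref{L:DM} gives $\log dm(n) \sim \pi\sqrt{n}$, the ratio $dm_{\leq k}(n)/dm(n)$ tends to zero, completing the proof.
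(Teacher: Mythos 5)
Your proposal is correct and follows essentially the same route as the paper: cut each semi-strict sequence at the peak to bound $dm_{\leq k}(n)$ by the convolution $\sum_{m} q_{\leq k}(m)\,p_{\leq k}(n-m)$, apply Romik-type exponential bounds whose exponent functions increase to $\pi\sqrt{2/3}$ and $\pi/\sqrt{3}$, obtain a uniform gap strictly below $\pi$ on the boundary and central ranges, and compare with $\log dm(n)\sim\pi\sqrt{n}$. The only real difference is that the uniform bound $q_{\leq k}(m)\ll \exp\left(H_d\left(k/\sqrt{m}\right)\sqrt{m}\right)$, which you propose to rederive by a Szekeres--Canfield--Romik or saddle-point analysis, is precisely the result the paper imports from \cite{B}, and the hypothesis $t>2$ is exactly the condition under which that cited bound is established (not merely a convenience).
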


\begin{remark}
Since $t_1 \leq t_2$ implies $dm_{\leq t_1\sqrt{n}}(n) \leq dm_{\leq t_2\sqrt{n}}(n)$, the conclusion of Lemma \ref{L:DMpeaks} holds with any $t \geq 0$.
\end{remark}

\begin{proof}[Proof of Lemma \ref{L:DMpeaks}]

Let $\mathscr{Q}_{\leq k}(n)$ and $q_{\leq k}(n)$ be, respectively, the set and number of distinct-parts partitions of $n$ whose largest part is at most $k$.  As in Section \ref{S:S}, we define a map 
\begin{equation}\label{dminj} 
\D_{m,k}(n) \hookrightarrow \bigcup_{m=0}^{n} \mathscr{P}_{\leq k}(m) \times \mathscr{Q}_{\leq k}(n-m),  
\end{equation}
by sending the peak and left parts to a distinct partition, and by sending the right parts to an unrestricted partition.  Thus,
\begin{equation}\label{dmsum}
\begin{split}
dm_{k}(n) &\leq \sum_{m \leq \epsilon n} \left(q_{\leq k}(m)p_{\leq k}(n-m) + q_{\leq k}(n-m)p_{\leq k}(m)\right) + \sum_{\epsilon n \leq m \leq (1-\epsilon)n} q_{\leq k}(m) p_{\leq k}(n-m)  \\ &=: \Sigma_1 + \Sigma_2.
\end{split}
\end{equation}

Szekeres found an asymptotic for distinct parts partitions of size $n$ when the number of parts is bounded by $t\sqrt{n}$ \cite{S51}.  To the best of our knowledge, however, the asymptotic we need, namely when part {\it sizes} are bounded $t\sqrt{n}$, has only recently been found by the author \cite{B}.  We stress that these counts are not the same due to lack of conjugation symmetry on the set of distinct parts partitions.  Thus, in a manner similar to \cite{R}, we have found that, for $t>2$, $$q_{\leq k}(n) \ll e^{B(t)\sqrt{n}},$$ where $B(t)$ is a strictly increasing function with $\lim_{t\to \infty} B(t)=\frac{\pi}{\sqrt{3}}$ \cite{B}.  For $m \in [\epsilon n, (1-\epsilon)n]$, we will write $m=sn,$ for $s \in [\epsilon, 1-\epsilon]$.  Thus, as in Section \ref{S:S}, 
$$
\Sigma_2 \ll n \exp\left(\sqrt{n} \sup_{s \in [\epsilon, 1-\epsilon]} \left(\sqrt{s}H\left(\frac{t}{\sqrt{s}}\right) + \sqrt{1-s}B\left(\frac{t}{\sqrt{1-s}}\right)\right)\right) 
$$ $$
\ll n\exp\left(\sqrt{n} \cdot C \sup_{s \in [\epsilon , (1-\epsilon)]} \left(\sqrt{2s}+ \sqrt{1-s}\right)\right),
$$ where $C < \frac{\pi}{\sqrt{3}}.$  Thus, 
$$
\Sigma_2 \ll n \exp\left(\sqrt{n} \cdot C \sqrt{3} \right) =o\left(\pi \exp(\sqrt{n})\right).
$$

Now, $$
\Sigma_1 \ll n q_{\leq k}\left(\floor{\epsilon n}\right) p_{\leq k}(n) + nq_{\leq k}(n) p_{\leq k}\left(\floor{\epsilon n}\right)) 
$$ $$
\ll n \exp\left(\sqrt{n} \left(\sqrt{\epsilon} B\left(\frac{t}{\sqrt{\epsilon}}\right) + \pi\sqrt{\frac{2}{3}}\right)\right) + n\exp\left(\sqrt{n}\left(\frac{\pi}{\sqrt{6}}+\sqrt{\epsilon}H\left(\frac{t}{\sqrt{\epsilon}}\right)\right)\right),
$$
where we have used the asymptotic formulas for $p(n)$ and $q(n)$ (\cite{A98}, Th. 6.2).  Since $H$ is bounded, we may choose $\epsilon = \epsilon(t)$ so that $\Sigma_1 \ll o\left(\exp(\pi \sqrt{n})\right).$  

Finally, since $dm(n) \sim \frac{1}{16n}e^{\pi \sqrt{n}}$ (\cite{BM}, Th. 1.3), we have $dm_{\leq k}(n)=o(dm(n))$ as required.
\end{proof}

This concludes the proof of Theorem \ref{T:DM}.

\end{document}